%
%
%
%
\documentclass[12pt]{amsart}

\usepackage{amsfonts, amssymb, amsmath, latexsym,  mathtools, amscd}
\usepackage{hyperref,graphicx}
\usepackage{enumerate}
\usepackage[dvipsnames]{xcolor}
\usepackage{url}
\usepackage{wrapfig}

\usepackage{vmargin}
\setpapersize{USletter}
\setmargrb{2.5cm}{2cm}{2.5cm}{2.cm} 
\hfuzz1.5pc 

\newtheorem{Theorem}{Theorem}[section]
\newtheorem{Proposition}[Theorem]{Proposition}

\newtheorem{Lemma}[Theorem]{Lemma}
\newtheorem{Corollary}[Theorem]{Corollary}

\theoremstyle{remark}

\newenvironment{Remark}
{\pushQED{\qed}\remark}
{\popQED\endremark}
\newenvironment{Example}
{\pushQED{\qed}\example}
{\popQED\endexample}
\newenvironment{Definition}
{\pushQED{\qed}\definition}
{\popQED\enddefinition}
\newenvironment{DefConstr}
{\pushQED{\qed}\defConstr}
{\popQED\enddefConstr}


\newcommand{\CC}{{\mathbb C}}
\newcommand{\RR}{{\mathbb R}}
\newcommand{\TT}{{\mathbb T}}
\newcommand{\ZZ}{{\mathbb Z}}

\newcommand{\calE}{{\mathcal E}}
\newcommand{\calV}{{\mathcal V}}
\newcommand{\BV}{{\textit{BV}}\hspace{0.5pt}}

\newcommand{\defcolor}[1]{{\color{blue}#1}}
\newcommand{\demph}[1]{\defcolor{{\sl #1}}}

\definecolor{TAMU}{RGB}{140,0,0}
\definecolor{myblue}{RGB}{0,0,198}
\definecolor{myred}{RGB}{182,0,0}


\title{The Spectral Edges Conjecture via Corners}

\author{M.~Faust}
\address{Matthew Faust, Department of Mathematics,
         Michigan State University, East Lansing, MI 48832, USA}
\email{mfaust@msu.edu}
\urladdr{https://mattfaust.github.io/}
\author{F. Sottile}
\address{Frank Sottile, Department of Mathematics,
         Texas A\&M University, College Station, Texas 77843,  USA}
\email{sottile@tamu.edu}
\urladdr{https://franksottile.github.io}

\thanks{Research supported in part by NSF grants DMS-2201005 and DMS-2052519, and Simons grant TSM-00014009.}
\subjclass[2010]{47A75, 81Q10, 05C50, 14Q20.}
\keywords{Critical Points, Bloch Variety, Morse function, Dispersion Polynomial.}
\begin{document}

\begin{abstract}
 The Spectral Edges Conjecture is a well-known and widely believed conjecture in the theory of discrete periodic
 operators.
 It states that the extrema of the dispersion relation are isolated, non-degenerate, and occur in a single band.
 We present two infinite families of periodic graphs which satisfy the Spectral Edges Conjecture.
 For each, every extremum of the dispersion relation is a corner point (point of symmetry).
 In fact, each spectral band function is a perfect Morse function.
 We also give a construction that increases dimension, while preserving that each spectral band function is a perfect Morse
 function.
\end{abstract}
\maketitle 

\section*{Introduction}

A periodic graph is a discrete model of a crystal.
In this setting, evolution equations governing electron transport become operators on the graph.
Floquet theory~\cite{KuchBook} realizes the spectrum of a periodic operator as images of spectral
band functions, defined either on a Brillouin zone or a compact torus.
The union of their graphs is the dispersion relation.
The structure of the band edges is of interest in mathematical physics; it is widely assumed that for generic operators, above
the band edges, critical points of the corresponding band function are nondegenerate.
This Spectral Edges Conjecture~\cite[Conj.\ 5.25]{KuchBAMS} holds in dimension 1~\cite[\S XIII.16]{RS78},
but can fail for Schr\"odinger operators~\cite{fk18} in higher dimensions.

We give two families of periodic graphs--infinitely many in every dimension---for which the
Spectral Edges Conjecture holds for each graph.
For this, we show that the spectral band functions have the significantly stronger property of being
perfect Morse functions.
We also give a construction that increases the dimension, while preserving nondegeneracy of spectral edges.
We use algebraic methods, as Floquet theory transforms analytic questions in spectral theory into algebraic questions.
Algebraic aspects of periodic operators are developed in the survey~\cite{AAPGO}, and have been invaluable in
recent  results~\cite{FL-G,FLM22,GKT,Liu22}.

In Section~\ref{S:Sparse} we show that if the dispersion polynomial is sufficiently sparse, then the spectral edges conjecture holds.
In Section~\ref{S:Isthmus}, we identify a structure of the graph $\Gamma$ which implies that the spectral edges conjecture
holds for discrete periodic Schr\"odinger operators.
Finally, in Section~\ref{Sec:Parallel} we give a construction of a $\ZZ^{d+1}$-periodic graph from a $\ZZ^d$-periodic
graph which preserves the degeneracy/nondegeneracy of critical points of spectral band functions.

These results use algebra and global arguments and are distinct from, but related to those in~\cite{BCCM},
which uses analysis to give a local criterion for a  local extremum of a spectral band function to be a global extremum.
They give this for graphs which are minimally connected, similar to the graphs we consider in Section~\ref{S:Isthmus}.
The graphs in Section~\ref{S:Sparse} are not necessarily minimally connected in that sense (e.g.~Example~\ref{Ex:singularHouse}).
As the spectral band functions we study are perfect Morse functions, local extrema are global extrema, as in~\cite{BCCM}.
Finally, both papers consider critical points at points of symmetry, called corner points.

\section{Background}
Let $\RR$, $\CC$, $\TT$, $\CC^\times$, and $\ZZ$ be, respectively, the real numbers, complex numbers,
the unit complex numbers, the nonzero complex numbers, and the integers.
We let $d>0$ be a positive integer, which may be called dimension, and $e_1,\dotsc,e_d\in\ZZ^d\subset\RR^d$
be the standard basis vectors.
We sketch some background on periodic graph operators and Floquet theory.
For more, see any of~\cite{AshcroftMermin,BerkKuch,Kittel,KorotyaevSabruova14,KuchBook,AAPGO}.

Let \defcolor{$\Gamma$} be a graph equipped with a free cocompact action of $\ZZ^d$.
That is,  $\ZZ^d$ has finitely many orbits on the vertices \defcolor{$\calV$} and edges
\defcolor{$\calE$} of $\Gamma$.
Write the action of $a\in \ZZ^d$ on $v\in \calV$ as $a{+}v$.
Let $\defcolor{W}\subset\calV$ be a collection of orbit representatives, called a \demph{fundamental domain}.

A \demph{labeling} of $\Gamma$ is a pair of  $\ZZ^d$-periodic functions
$\defcolor{V}\colon \calV \to \RR$ and $\defcolor{E}\colon \calE \to \RR$.
We write $(\Gamma,V,E)$ for the resulting labeled periodic graph.
These are the data for a \demph{discrete periodic operator} on $\Gamma$.
We let $\defcolor{H}\vcentcolon= V+\Delta_E$ be the sum of the multiplication operator $V$ and a 
weighted adjacency operator $\defcolor{\Delta_E}$  acting on complex-valued functions on $\calV$.
Given a function $f\colon \calV\to \CC$, the value of $Hf$ at a vertex $v\in\calV$ is
 \begin{equation}\label{eq1}
     (Hf)(v)\ \vcentcolon=\ V(v) f(v) + \sum_{(v,u) \in \calE} E(v,u)f(u)\,.
 \end{equation}
For the constant function $E\colon \calE \to \{1\}$, $H$ is the
\demph{discrete periodic Schr\"odinger operator}.
By cocompactness, $H$ is a bounded operator on the Hilbert space $\ell_2(\calV)$ of square-summable functions on
$\calV$.
As $V$ and $E$ are real-valued, it is self-adjoint and has real spectrum $\defcolor{\sigma(H)}$.

Floquet theory describes the interaction of the spectrum with the representations of $\ZZ^d$,
leading to the dispersion relation between the energy $\lambda$ and quasi-momenta $k\in\RR^d$.
A \demph{Floquet function}  with quasi-momentum $k$ is a function $g$ on $\calV$ satisfying
\[
g(a{+}v)\ =\ e^{2\pi\sqrt{-1} k\cdot a}g(v) \qquad\mbox{ for all } a\in\ZZ^d\ \mbox{ and }v\in\calV\,.
\]
As the topology of the compact torus $\TT^d$ plays a role, we change variables.
Set $\defcolor{z}\vcentcolon= e^{2\pi\sqrt{-1} k}\in\TT^d$, which we call a  \demph{Floquet multiplier}.
Then $e^{2\pi\sqrt{-1} k\cdot a} = \defcolor{z^a} \vcentcolon= z_1^{a_1}\dotsb z_d^{a_d}$,
where $z=(z_1,\dotsc,z_d)$, and  quasi-periodicity of $g$ becomes
\begin{equation}\label{eq2}
    g(a{+}v)\ =\ z^a  g(v)\,,\ \text{ for all } v \in \calV\  \text{and } a\in \ZZ^d\,.
\end{equation}
A Floquet function $g$ with multiplier $z$ is determined by its values on the fundamental domain $W$.
On such a function, the operator $H$ becomes
\begin{equation}\label{eq3}
     (Hg)(v)\ =\  V(v) g(v) + \sum_{(v,a+u) \in \calE} E(v,a{+}u)  z^ag(u)\,,
\end{equation} 
where $u,v \in W$ and $a \in \ZZ^d$.
Thus, $H$ acts as multiplication by a $|W| \times |W|$ matrix $H(z)$.
The spectrum $\defcolor{\sigma_z(H)}$ of $H(z)$ consists of the roots of its characteristic polynomial.

By Floquet theory the spectrum $\sigma(H)$ is the union of these eigenvalues $\sigma_z(H)$,
\begin{equation}\label{Eq:Floquet}
     \sigma(H)\ =\ \bigcup_{z\in\TT^d} \sigma_z(H) 
    \ =\ \{ \lambda\in\RR \mid \exists z\in\TT^d\;\mbox{ s.t.~}\; \det(H(z)-\lambda)=0\}\,.
 \end{equation}      
This expression for the spectrum leads to the following interpretation.

Treating the coordinates of $z=(z_1,\dotsc,z_d)\in\TT^d$ as indeterminates,  $H(z)$ becomes a $|W|\times|W|$ matrix
with Laurent polynomial entries. 
The entry in row $v$ and column $u$ is
 \begin{equation}\label{Eq:MatrixEntry}
    V(v)\delta_{v,u}\ +\  \sum_{(v,a+u) \in \calE} E(v,a{+}u)  z^a\qquad \in\ \RR[z_1^{\pm}, \dots, z_d^{\pm}]\,.
 \end{equation}
We call this matrix \defcolor{$H(z)$} of Laurent polynomials the \demph{Floquet matrix} and refer to its
characteristic polynomial $D(z,\lambda)\vcentcolon= \det(H(z)-\lambda)$ as the \demph{dispersion polynomial}.

The \demph{dispersion relation}, \defcolor{$\mbox{DR}$}, is the subset of $\TT^d\times\RR$ defined by the vanishing of the dispersion polynomial.
That is,
\[
\mbox{DR}\ \vcentcolon=\ \{ (z,\lambda)\in\TT^d\times\RR \mid D(z,\lambda)\ =\ 0\}\,.
\]
By~\eqref{Eq:Floquet}, its projection to $\RR$ (the $\lambda$ axis) is the spectrum of $H$.
By periodicity, if $(v,a{+}u)$ is an edge of $\Gamma$, then so is $(u, -a{+}v)$.
Thus, \eqref{Eq:MatrixEntry} implies that $H(z)^T=H(z^{-1})$, 
and for $z\in\TT^d$, $H(z)$ is hermitian so it has $|W|$ real eigenvalues.
Thus, the dispersion relation consists of $|W|$ branches, each of which is the graph of 
a \demph{spectral band function}.
Figure~\ref{F:DispersionRelations} shows three dispersion relations for $d=2$,
\begin{figure}[htb]
   \centering
   \includegraphics[height=100pt]{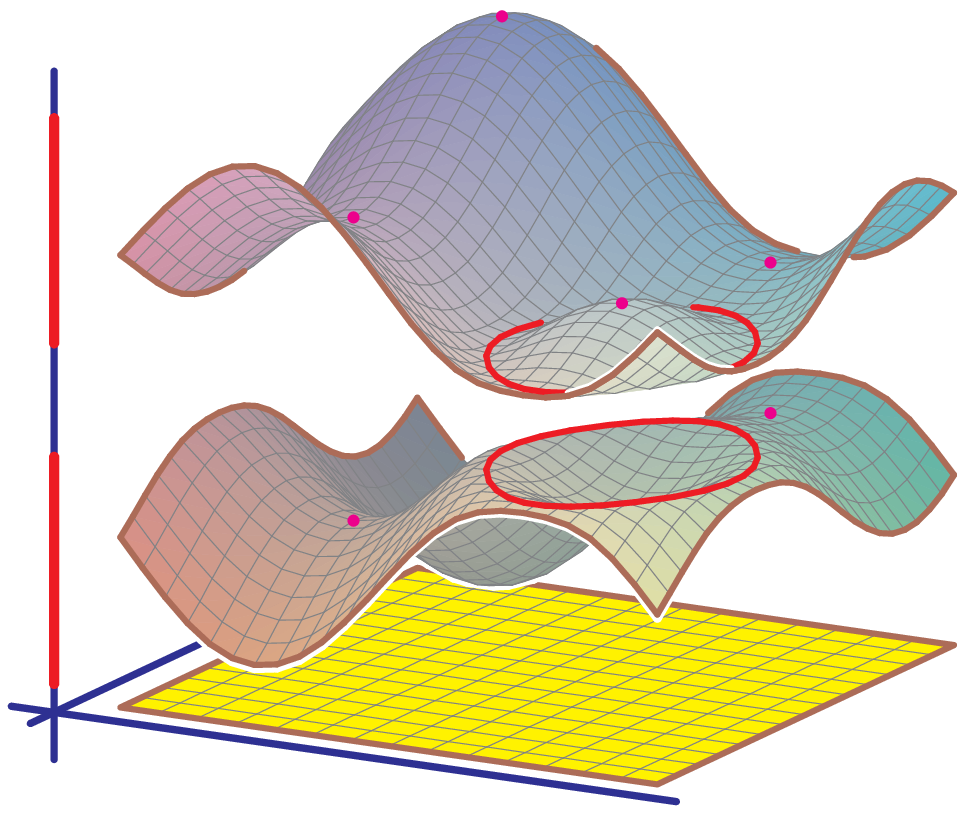}\qquad
   \includegraphics[height=110pt]{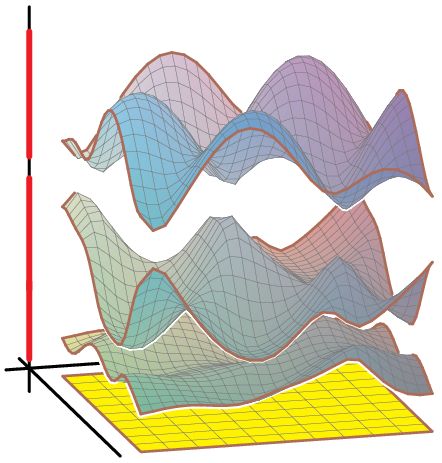}\qquad
   \includegraphics[height=120pt]{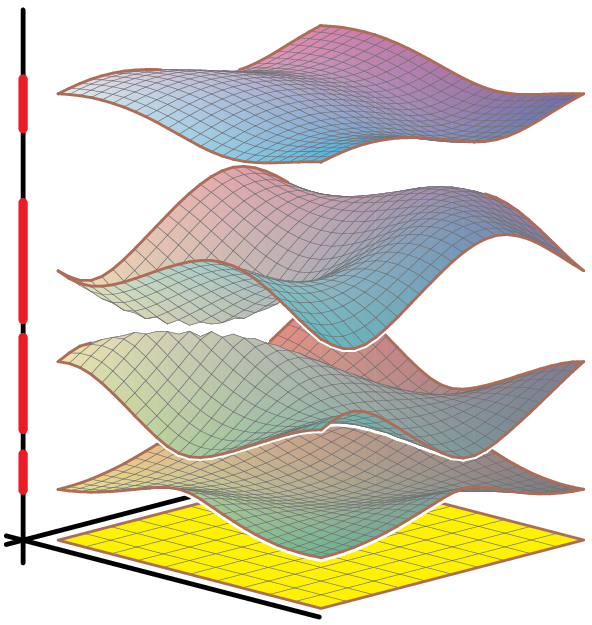}
     
  \caption{Three dispersion relations for graphs with $d=2$ and $|W|=2,3,4$.}
  \label{F:DispersionRelations}
\end{figure}  
each drawn over $[-\frac{\pi}{2},\frac{3\pi}{2}]^2$, which is
a Brillouin zone, equivalently, a fundamental domain for the torus $\TT^2$.
The image in $\RR$ of each spectral band function is a \demph{spectral band}.
These may overlap, as in the middle dispersion relation.

In what follows, \demph{generic} means on the complement of a proper algebraic subset of $\RR^n$ or $\CC^n$.
A generic set in particular is open and dense.
The set of parameters for $\Gamma$ forms a finite-dimensional vector space whose dimension is the 
number of orbits of $\ZZ^d$ on $\calV\cup\calE$.
The \demph{spectral edges conjecture} for a graph $\Gamma$ states that, for a generic choice of $V$ and $E$,
the extrema of the dispersion relation are isolated, nondegenerate, and each occurs on a single spectral band function.
The conjecture was first made for discrete periodic Schr\"odinger operators and was believed to be true for any
connected graph; however, the conjecture was disproven in \cite{fk18}.
An example of this is shown  on the left in Figure~\ref{F:DispersionRelations}, each spectral band function has a
curve of critical points.
Although the conjecture fails for Schr\"odinger operators, it remains open for discrete periodic operators.
It is also of interest to understand for which graphs the conjecture holds for Schr\"odinger operators. 

The most extreme example of non-isolated critical points is when a spectral band function is a constant,
$\lambda_0$, so that $D(z,\lambda_0)=0$.
Equivalently, when $H$ has eigenvectors with eigenvalue $\lambda_0$, necessarily of infinite
multiplicity~\cite[\S~3.4.1]{AAPGO}.
When this occurs, the dispersion relation has a \demph{flat band}.
Figure~\ref{F:Lieb} shows the $\ZZ^2$-periodic Lieb lattice, 
\begin{figure}[htb]
\centering

\includegraphics[height=90pt]{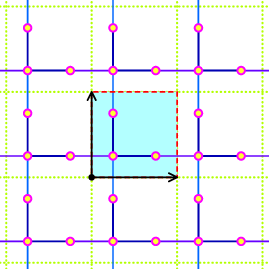}\qquad
\includegraphics[height=100pt]{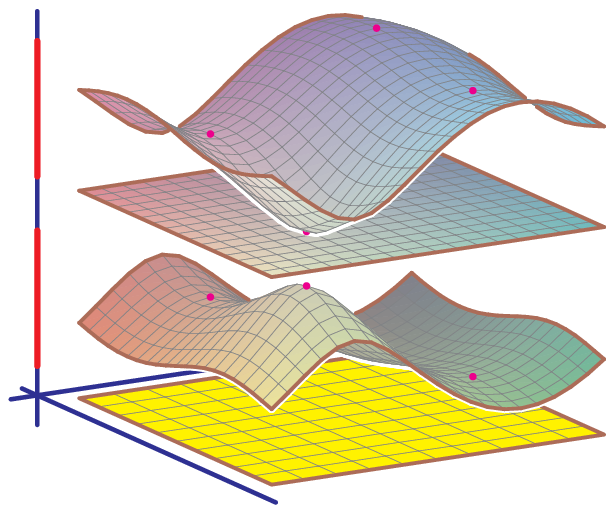}\qquad
\includegraphics[height=110pt]{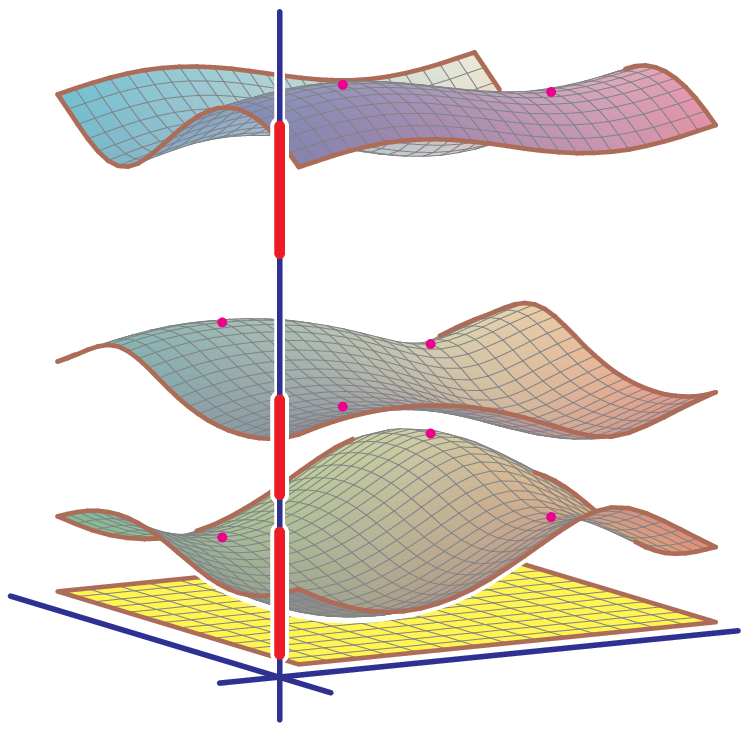}

\caption{The Lieb lattice and two of its  dispersion relations.}
\label{F:Lieb}
\end{figure}
with two dispersion relations for operators on it.
The one on the left has a flat band.
The main result of~\cite{faust2025rareflatbandsperiodic} is that for a given graph, the set of parameters for which
the dispersion relation has a flat band lies in a proper algebraic subset.
This is strengthened in~\cite{FaustKachkovskiy}.

We present two families of $\ZZ^d$-periodic graphs, each having infinitely many members for every $d>0$, such that
every graph in each family satisfies the spectral edges conjecture.
Graphs in the first family satisfy the conjecture for discrete periodic operators, while those in the second family
satisfy the conjecture for discrete periodic Schr\"odinger operators.

For graphs in these two families, we establish the spectral edges conjecture via  a stronger result, 
the critical point conjecture.
This was formulated in~\cite{DKS} and studied in~\cite{FS24}.
For this, we complexify, extending our variables from $\TT^d\times\RR$ to $(\CC^\times)^d\times\CC$.
The complexification of the dispersion relation is the \demph{Bloch variety},
\[
   \defcolor{\BV}\ \vcentcolon=\ \{ (z,\lambda)\in(\CC^\times)^d\times\CC \mid D(z,\lambda)=0\}\,.
\]
A  point $(z,\lambda) \in (\CC^\times)^d \times \CC$ is a \demph{critical point} (of the function $\lambda$ on the
Bloch variety) if it is a solution to the system of polynomial equations,
called the \demph{critical point equations}
 \begin{equation}\label{Eq:CPE}
   D(z,\lambda)\ =\ \frac{\partial D}{\partial z_1}(z,\lambda)\ =\
   \dotsb\ =\ \frac{\partial D}{\partial z_d}(z,\lambda)\  =\ 0\,.
 \end{equation}
Solutions are points $(z,\lambda)$ on the Bloch variety where the gradient of the function $\lambda$ vanishes.
These include all non-smooth points of the Bloch variety and every extremum of a
spectral band function is a critical point.
The \demph{critical point conjecture} states that for generic parameters $(V,E)$, every critical point
$(x,\lambda_0)$ of the Bloch variety is smooth and isolated.
Equivalently, the Jacobian matrix of the critical point equations~\eqref{Eq:CPE} is invertible at $(x,\lambda_0)$.
We relate this to the spectral edges conjecture.

\begin{Theorem}\label{Th:CPC_SEC}
  The critical point conjecture implies the spectral edges conjecture.
\end{Theorem}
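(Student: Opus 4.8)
The plan is to reduce the implication to a pointwise statement: I fix parameters $(V,E)$ at which the conclusion of the critical point conjecture holds and show that the conclusion of the spectral edges conjecture holds for those same parameters. Since the parameters satisfying the critical point conjecture form a generic set, the set on which the spectral edges conjecture holds then contains a generic set and is therefore generic. So fix such $(V,E)$ and let $(z_0,\lambda_0)\in\TT^d\times\RR$ be an extremum of a spectral band function. As already noted in the text, every extremum of a spectral band function is a critical point, i.e.\ a solution of~\eqref{Eq:CPE}; by hypothesis it is then smooth and isolated, and the Jacobian of~\eqref{Eq:CPE} is invertible there. Isolatedness of the extremum is immediate, since the point is isolated even among all complex solutions of~\eqref{Eq:CPE}.

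The algebraic heart of the argument is a factorization of this Jacobian. Order the equations of~\eqref{Eq:CPE} as $(D,\partial D/\partial z_1,\dots,\partial D/\partial z_d)$ and the variables as $(z_1,\dots,z_d,\lambda)$. At a critical point the first row of the Jacobian $J$ is $(0,\dots,0,\partial D/\partial\lambda)$, because $\partial D/\partial z_j=0$ there, while the lower-left $d\times d$ block is the $z$-Hessian $\bigl[\partial^2 D/\partial z_i\partial z_j\bigr]$. Expanding along the first row gives
\[
   \det J\ =\ \pm\,\frac{\partial D}{\partial\lambda}\cdot
   \det\Bigl[\tfrac{\partial^2 D}{\partial z_i\partial z_j}\Bigr]_{i,j=1}^{d}\,.
\]
Hence invertibility of $J$ is equivalent to the two conditions $\partial D/\partial\lambda\neq 0$ and nondegeneracy of the $z$-Hessian of $D$, and I extract the remaining two clauses of the spectral edges conjecture from these.

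The condition $\partial D/\partial\lambda\neq 0$ means the gradient of $D$ is nonzero, so $(z_0,\lambda_0)$ is a smooth point of $\BV$, and the implicit function theorem writes $\BV$ locally as a single graph $\lambda=\lambda(z)$. Thus the extremum lies on a single spectral band function, giving that clause. Differentiating $D(z,\lambda(z))\equiv 0$ once shows $\partial\lambda/\partial z_i=0$ at the point, and differentiating a second time, using that vanishing, yields $\partial^2\lambda/\partial z_i\partial z_j=-(\partial D/\partial\lambda)^{-1}\,\partial^2 D/\partial z_i\partial z_j$, so the $z$-Hessian of the band function is nonsingular exactly when that of $D$ is.

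The final step, and the one requiring the most care, is transporting nondegeneracy from the holomorphic variables $z$ to the real Morse-theoretic variables $k$, where $z_j=e^{2\pi\sqrt{-1}\,k_j}$. Writing $\tilde\lambda(k)=\lambda(e^{2\pi\sqrt{-1}\,k})$ and applying the chain rule twice, all first derivatives vanishing at the critical point leaves
\[
   \frac{\partial^2\tilde\lambda}{\partial k_i\,\partial k_j}\ =\
   (2\pi\sqrt{-1})^2\,z_i z_j\,\frac{\partial^2\lambda}{\partial z_i\,\partial z_j}
   \qquad\text{at }(z_0,\lambda_0)\,,
\]
so the real Hessian equals $-4\pi^2\,\mathrm{diag}(z)\,\bigl[\partial^2\lambda/\partial z_i\partial z_j\bigr]\,\mathrm{diag}(z)$ and its determinant is a nonzero multiple of $\det\bigl[\partial^2\lambda/\partial z_i\partial z_j\bigr]$. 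The real Hessian is therefore nonsingular, which is precisely nondegeneracy of the extremum. The subtlety to address is that this real symmetric Hessian is expressed through unit-modulus complex quantities $z_j$: one must verify the identity is consistent, the imaginary contributions canceling because $\tilde\lambda$ is real-valued, and that nondegeneracy, being the nonvanishing of a determinant, survives the diagonal complex scaling by $\mathrm{diag}(z)$.
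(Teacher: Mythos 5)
Your treatment of the isolation and nondegeneracy clauses is correct. The factorization $\det J=\pm\,\partial D/\partial\lambda\cdot\det\bigl[\partial^2D/\partial z_i\partial z_j\bigr]$ at a critical point, the implicit-function-theorem identity $\partial^2\lambda/\partial z_i\partial z_j=-(\partial D/\partial\lambda)^{-1}\partial^2D/\partial z_i\partial z_j$, and the chain-rule transport to the real coordinates $k$ together constitute a self-contained reproof of the statement that invertibility of the Jacobian of~\eqref{Eq:CPE} implies nondegeneracy of the extremum; the paper simply cites this as \cite[Lem.~5.1]{FS24}. That part of your route is fine and arguably more transparent than a citation.

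The gap is in the single-band clause. From $\partial D/\partial\lambda(z_0,\lambda_0)\neq 0$ you conclude only that $\lambda_0$ is a simple eigenvalue of $H(z_0)$, i.e., that exactly one sheet of $\BV$ passes through the point $(z_0,\lambda_0)$. But the clause of the spectral edges conjecture (as in \cite[Conj.~5.25]{KuchBAMS}, and as the paper proves it) is the global statement that the extremal \emph{value} $\lambda_0$ is attained by a single band function. Your local argument does not exclude the scenario in which $\lambda_j$ attains its maximum $\lambda_0$ at $z_0$ while $\lambda_{j+1}$ attains its minimum $\lambda_0$ at a different quasimomentum $z_1\neq z_0$; both points can be smooth, isolated, nondegenerate critical points with invertible Jacobians, yet the band edge is then shared by two bands. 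No argument localized at $(z_0,\lambda_0)$ can rule this out, which is why your pointwise reduction fails for this clause: the single-band property is not a consequence of the CPC conclusion at fixed parameters, and a further restriction of parameters is needed. The paper supplies it by a separate degeneration argument: take the potential values $V(v)$, $v\in W$, distinct and scale the edge weights by $t$; at $t=0$ the dispersion relation is $|W|$ disjoint flat bands, and by continuity the images of the band functions remain disjoint for $0<t<\epsilon$, so each extremal value is attained by one band. You need this (or some substitute global argument), intersected with the generic set on which the CPC conclusion holds, to complete the proof.
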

\begin{proof}
  Suppose that the critical point conjecture holds for a particular graph.
  Let $V$, $E$ be parameters for operators on $\Gamma$ such that all critical points are smooth and isolated, and let  
  \defcolor{$(x,\lambda_0)$} be an extremum of a spectral band function.
  Then it is a critical point and thus an isolated solution to the critical point equations~\eqref{Eq:CPE}
  with invertible Jacobian.
  By~\cite[Lem.\ 5.1]{FS24}, the  Hessian matrix at $(x,\lambda_0)$ is invertible, so that it is a
  nondegenerate extremum.

  It remains to show that each extremum occurs on a single spectral band function, for generic parameters.
  We argue that this holds when the values of the potential $V$ are distinct and the edge parameters $E$ are sufficiently small,
  which implies that this holds for general parameters.
  Suppose that the potential values $V(v)$ for $v\in W$ are distinct, and let $E\colon\calE\to\RR^\times$ be any 
  of nonzero periodic function.
  For $t\geq 0$, define $\defcolor{E_t}\colon\calE\to\RR^\times$ by $E_t(u,v)\vcentcolon= t\cdot E(u,v)$, for $(u,v)\in\calE$.
  Then the dispersion polynomial for parameters $(V,E_t)$  has the form
  \[
      D_t(z,\lambda)\ =\ \prod_{v\in W}(V(v)-\lambda)\ +\ t\cdot F(z,\lambda,t)\,,
  \]
  for some polynomial $F$, which is a Laurent polynomial in $z$ and an ordinary polynomial in $\lambda$ and $t$.
  When $t=0$, the dispersion relation consists of $|W|$ isolated flat bands.
  Each is the graph of a spectral band function that is constant and equal to a potential value.
  By continuity, there exists $\epsilon>0$ so that if $0<t<\epsilon$, the images of the spectral band functions remain
  disjoint, which implies that each extremum occurs for a single spectral band function.
\end{proof}

The second half of the proof is well-known folklore.
Another bit of folklore\footnote{For example, see page 4 of~\cite{ABG}.}, which we give for completeness, is that
there are at least $2^d|W|$ critical points, counted with multiplicity.
These are critical points which occur at points of symmetry or \demph{corner points}, where $z_i = \pm 1$
for each $i \in [d]$, equivalently, where $z^2=1$.

A function $f\colon M\to \RR$ on a compact manifold $M$ is \demph{Morse}~\cite{Milnor} if all of its critical points are nondegenerate.
The Morse inequality implies that the sum of the Betti numbers of $M$ is a lower bound for the number of critical points.
When these two quantities are equal,  $f$ is a \demph{perfect Morse function}.

\begin{Lemma}\label{L:minCritPts}
  Every point $(z,\lambda)$ on the Bloch variety with $z^2=1$ is critical.
  If the critical points are isolated, then there are at least $2^d|W|$ critical points, counting multiplicity.

  If there are exactly $2^d|W|$ critical points, each nondegenerate, 
  then  they all occur at the corner points and every spectral band function is a perfect Morse function.
\end{Lemma}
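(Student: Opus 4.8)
The plan is to prove the three assertions in turn, using throughout the symmetry of the dispersion polynomial. Since $H(z)^T=H(z^{-1})$, taking determinants gives the identity $D(z,\lambda)=D(z^{-1},\lambda)$. For the first assertion I would differentiate this in $z_i$: by the chain rule $\partial_{z_i}D(z,\lambda)=-z_i^{-2}\,(\partial_{z_i}D)(z^{-1},\lambda)$, and at a corner point, where $z_i=\pm1$ so that $z^{-1}=z$ and $z_i^{-2}=1$, this forces $\partial_{z_i}D(z,\lambda)=0$. Crucially this holds for every $\lambda$, so each $\partial_{z_i}D$ vanishes identically along the line $\{z=z^{(0)}\}$ through a corner $z^{(0)}$; such a point $(z^{(0)},\lambda)$ is then critical exactly when it also satisfies $D=0$, i.e.\ lies on the Bloch variety. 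This identically-vanishing observation is what I expect to drive the whole argument.

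For the second assertion, fix one of the $2^d$ corners $z^{(0)}$ (the solutions of $z^2=1$). As $z^{(0)}$ and the parameters are real, $H(z^{(0)})$ is real symmetric, so $D(z^{(0)},\lambda)$ has exactly $|W|$ real roots with multiplicity. I would bound the intersection multiplicity of the critical point equations at each such root $\lambda_0$ below by its multiplicity $m$ as a root of $D(z^{(0)},\cdot)$. In the local ring $\mathcal{O}$ at $(z^{(0)},\lambda_0)$, the first assertion places every $\partial_{z_i}D$ in the ideal $\mathfrak{m}_z$ of the corner line, so the critical ideal $I=(D,\partial_{z_1}D,\dots,\partial_{z_d}D)$ satisfies $I+\mathfrak{m}_z=(D)+\mathfrak{m}_z$, whence $\dim_{\CC}\mathcal{O}/I\ge\dim_{\CC}\mathcal{O}/(I+\mathfrak{m}_z)=\dim_{\CC}\mathcal{O}/\bigl((D)+\mathfrak{m}_z\bigr)=m$, the last step because $D(z^{(0)},\lambda)$ vanishes to order $m$ at $\lambda_0$. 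Summing over the roots at a corner gives local multiplicity at least $|W|$, and summing over the $2^d$ corners, whose lines are disjoint, gives at least $2^d|W|$ critical points counted with multiplicity.

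For the third assertion I would first invoke~\cite[Lem.\ 5.1]{FS24}: a nondegenerate critical point has invertible Jacobian, hence intersection multiplicity one, so exactly $2^d|W|$ nondegenerate critical points have total multiplicity $2^d|W|$. By the second assertion the corners already account for $2^d|W|$, so there are no other critical points and, by the multiplicity bound just proved, every eigenvalue at every corner is simple ($m=1$). In particular every critical point has $\partial_\lambda D\neq0$, so no point of the Bloch variety has all partials of $D$ vanishing: $\BV$ is smooth. Since $D$ is real on $\TT^d\times\RR$ and $\partial_{k_i}=2\pi\sqrt{-1}\,z_i\partial_{z_i}$, the real gradient is nonzero on $\BV\cap(\TT^d\times\RR)$, so $M\vcentcolon=\BV\cap(\TT^d\times\RR)$ is a smooth compact $d$-manifold.

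The crux is to exclude eigenvalue crossings over $\TT^d$, which is what forces the band functions to be globally smooth. Suppose some $z^\ast\in\TT^d$ had a multiple eigenvalue $\lambda^\ast$. Restricting to a real line $t\mapsto z^\ast e^{2\pi\sqrt{-1}\,tv}$ in $\TT^d$ gives a real-analytic Hermitian family, whose eigenvalues may be chosen real-analytic in $t$ (Rellich), and $D$ factors along the line as $(-1)^{|W|}$ times their product. Because $\lambda^\ast$ is attained by at least two branches at $t=0$, every term of $\partial_t D$ at $(z^\ast,\lambda^\ast)$ still contains a vanishing factor, so this directional derivative is zero; as $v$ is arbitrary, all $\partial_{z_i}D(z^\ast,\lambda^\ast)=0$, and with $\partial_\lambda D(z^\ast,\lambda^\ast)=0$ this makes $(z^\ast,\lambda^\ast)$ a singular point of $\BV$, contradicting smoothness. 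Hence eigenvalues are everywhere simple on $\TT^d$, the bands $\lambda_1<\dots<\lambda_{|W|}$ are globally defined smooth functions, and $M$ is the disjoint union of their $|W|$ graphs, each a copy of $\TT^d$. On each graph the critical points of $\lambda$ are exactly the $2^d$ corner points and are nondegenerate, since at a corner the real Hessian of a band function equals $-4\pi^2\operatorname{diag}(z)$ times the Hessian of~\cite[Lem.\ 5.1]{FS24} conjugated by the real involution $\operatorname{diag}(z)$, hence is invertible iff the latter is. Each band function is therefore a Morse function on $\TT^d$ with exactly $2^d=\sum_i b_i(\TT^d)$ critical points, hence perfect, and all critical points occur at corner points. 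I expect this crossing-exclusion step to be the main obstacle, being the one place where the analytic input that $H(z)$ is Hermitian on the torus is indispensable.
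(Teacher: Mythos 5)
Your proposal is correct, and its skeleton matches the paper's: the same symmetry identity $D(z,\lambda)=D(z^{-1},\lambda)$ for assertion (1), a local multiplicity lower bound for (2), and the Morse inequality on $\TT^d$ for (3). Two differences are worth noting. For (2), the paper bounds the critical-point multiplicity below by the multiplicity of the point \emph{on the Bloch variety} (the colength of the ideal generated by $D$ and \emph{all} of its partials, which contains the critical ideal), whereas you enlarge the critical ideal by the ideal $\mathfrak{m}_z$ of the corner line instead; both are ``add generators to get a lower bound'' arguments, but yours has the advantage of reducing directly to the one-variable count $\deg_\lambda D(z^{(0)},\cdot)=|W|$ at each corner, a step the paper leaves implicit. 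For (3), the paper's proof is essentially a two-line appeal to the Morse inequality applied to each band function; it tacitly assumes that the band functions are smooth and that nondegeneracy of the critical points of the Bloch variety transfers to nondegeneracy for the individual band functions. Your additional steps---deducing smoothness of $\BV$ from $\partial_\lambda D\neq 0$ at nondegenerate critical points, the Rellich-type exclusion of eigenvalue crossings so that the bands are globally smooth and separated, and the chain-rule comparison of the $k$- and $z$-Hessians at a corner---supply exactly the justification the paper omits, and this extra care is the main value your write-up adds over the published argument.
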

\begin{proof}
  As $D(z,\lambda) = D(z^{-1},\lambda)$, we have
  \[
  \frac{\partial D}{\partial z_i}(z_1,\dots,z_i,\dots, z_d)\ =\
  -z_i^{-2}\frac{\partial D}{\partial z_i}(z_1^{-1},\dots,z_i^{-1},\dots, z_d^{-1})\,.
  \]
  When $z_i = \pm 1$, this implies that $\partial D/\partial z_i=-\partial D/\partial z_i=0$.
  This implies the first statement.

  For the second, there are $2^d|W|$ points, counted with multiplicity, on the Bloch variety with $z^2=1$.
  The multiplicity of a critical point, {\sl as a point on the Bloch variety}, is a lower bound for its multiplicity
  as a critical point,
  e.g.\ as a solution to the critical point equations~\eqref{Eq:CPE}.
  This is because the multiplicity of a point $(x,\lambda_0)$ on the Bloch variety is the dimension of the
  quotient of the ring $\CC[z^{\pm},\lambda]$ localized at $(x,\lambda_0)$ by the
  ideal generated by $D$ and {\sl all} of its partial derivatives, while for the multiplicity of a solution
  to~\eqref{Eq:CPE}, we use the ideal generated by $D$ and
  {\sl only} its partial derivatives with respect to the $z$-variables.

  For the third statement, observe that each spectral band function is a continuous function  $f\colon\TT^d\to\RR$.
  If its critical points are nondegenerate, then by the Morse inequality,
  \[
  \mbox{the number of critical points  of $f$}\ \geq\ 2^d\ =\ \mbox{sum of Betti numbers of $\TT^d$}\,.
  \]
  As there are $|W|$ spectral band functions, we must have equality for every spectral band function, which implies that each
  is a perfect Morse function. 
\end{proof}

The unifying theme of the two families we will discuss is that general operators in these families
have critical points only at the corner points, all critical points are nondegenerate, and 
each spectral band function is a perfect Morse function.

Finding these families was surprising to us for the following reason.
The characteristic matrix $H(z)-\lambda$ is a map from the
$(d{+}1)$-dimensional space $(\CC^\times)^d\times\CC$ to the space \defcolor{$M_W$} of $|W|\times|W|$ matrices.
The Bloch variety is the inverse image of the determinant hypersurface, $\defcolor{\mbox{Det}}\subset M_W$.
Since the singular locus of $\mbox{Det}$ has codimension 4 in $M_W$, we expect that when $d\geq 3$, the Bloch variety will
have singularities,  which are non-isolated when $d>3$.
Any singularity is a critical point.
As observed in the proof of Lemma~\ref{L:minCritPts}, an isolated singularity
has multiplicity greater than 1 as a critical point.

Thus, when $d>2$, we expect that there are degenerate critical points.
While working on~\cite{FRS2025}, we discovered a family of $\ZZ^2$-periodic graphs whose spectral band functions were
perfect Morse functions, and realized how to generalize that to all $d$.

\section{Minimally Sparse Graphs}\label{S:Sparse}

If a $\ZZ^d$-periodic graph $\Gamma$ is connected, then the exponents $\alpha$ of $z$ which occur in its Floquet matrix $H(z)$
span $\ZZ^d$, and we expect that this also holds for its dispersion polynomial $D(z,\lambda)$.
A polynomial $D(z,\lambda)$ is \demph{minimally sparse} if the only monomials in $z$ which occur are $z_i^{\pm1}$.
A $\ZZ^d$-periodic graph $\Gamma$ is \demph{minimally sparse} if for generic parameters, its dispersion  polynomial
is minimally sparse.

Below is a generic labeling of Lieb lattice and its dispersion polynomial, where $u$, $v$, and $w$ are values of
the potential at eponymous vertices, showing that it is minimally sparse. 
\[
   \begin{picture}(104,104)(12,12)
     \put( 12, 12){\includegraphics{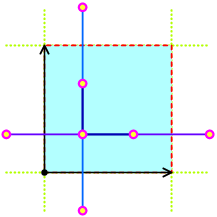}}
     \put( 54, 23){\small$c$}
     \put( 54, 65){\small$a$}
     \put( 54,101){\small$c$}
     \put( 22, 55){\small$d$}
     \put( 65, 55){\small$b$}
     \put(101, 55){\small$d$}
     \put( 40, 74){\small$w$}
     \put( 42, 42){\small$u$}     \put( 75, 42){\small$v$}
   \end{picture}\qquad
    \raisebox{65pt}{\begin{minipage}[t]{250pt}
        $\quad\,  (\lambda-u)(\lambda-v)(\lambda-w)$\vspace{3pt} \newline
        $-\lambda( a^2 + b^2 +c^2 + d^2 ) \ +\ v(a^2 + c^2) + w(b^2 + d^2)$\vspace{3pt}\newline
        $ -\ bd(z_1+z_1^{-1}) (\lambda-w) \ -\ ac(z_2+z_2^{-1})(\lambda-v)$\,.
   \end{minipage}}
\]
%

%
%

\subsection{Coordinate Projections}\label{Sec:CoordinateProjections}
The construction of the Floquet matrix from a labeled periodic graph may be reversed.
While  explained in~\cite[\S\ 3.1.2]{AAPGO}, we sketch that here.

\begin{DefConstr}\label{D:GraphFromMatrix}
  Suppose that $H(z)$ is an $m\times m$ matrix whose entries are Laurent polynomials in $z=(z_1,\dotsc,z_r)$, and which 
  has the symmetry $H(z)^T=H(z^{-1})$.
  Let \defcolor{$W$} be a set with $m$ elements $v_1,\dotsc,v_m$ with $v_i$ corresponding to the $i$th row/column of $H(z)$.
  Set $\defcolor{\calV}\vcentcolon=\ZZ^r\times W$, which carries a natural action of $\ZZ^r$,
  \[
  \mbox{for $\alpha\in\ZZ^r$ and $(\beta,v)\in\ZZ^r\times W$, }\  \alpha+(\beta,v)\ \vcentcolon=\ (\alpha+\beta,v)\,.
  \]

  We define a $\ZZ^r$-periodic potential $\defcolor{V}\colon\calV\to\RR$.
  For $(\beta,v)\in \ZZ^r\times W$, set
  \[
  V(\beta,v)\ \vcentcolon=\ \mbox{constant term of $H(z)_{v,v}$}\,.
  \]
  All other terms in $H(z)$ give labeled periodic edges as follows.
  For $v,u\in W$, if $cz^\alpha$ is a term in the entry $H(z)_{v,u}$, and we do not have $v=u$ and $\alpha=0$, 
  then we have an orbit of edges
  \begin{equation}\label{E:orbitOfEdges}
    (\beta,v)\ \sim\ (\alpha+\beta,u) \qquad \mbox{ for } \beta\in\ZZ^r\,,
  \end{equation}
  with common label $c$.
  As $H(z)^T=H(z^{-1})$, there will be a corresponding term $cz^{-\alpha}$ in the entry $H(z)_{u,v}$.
  The resulting orbit of edges,
  \[
     (\beta,u)\ \sim\ (-\alpha+\beta,v) \qquad \mbox{ for } \beta\in\ZZ^r\,,
  \]
  with label $c$ is just a reparametrization of~\eqref{E:orbitOfEdges}.

  Let \defcolor{$\calE$} be the set of the edges~\eqref{E:orbitOfEdges} for terms $cz^\alpha$ in $H(z)$ that are
  not constants in diagonal entries and let $\defcolor{E}\colon\calE\to\RR$ be the constructed $\ZZ^r$-periodic function.
  Let $\Gamma=(\calV,\calE)$ be the $\ZZ^r$-periodic graph with labels $V,E$.
 \end{DefConstr}

The point of this is the following proposition, whose proof we leave to the reader.

\begin{Proposition}\label{P:GraphFromFloquet}
  If $\Gamma=(\calV,\calE)$ with $\ZZ^r$-periodic functions $V\colon\calV\to\RR$ and $E\colon\calE\to\RR$ are
  constructed following Definition-Construction~\ref{D:GraphFromMatrix} from a matrix $H(z)$ of Laurent polynomials
  satisfying $H(z)^T=H(z^{-1})$, then $H(z)$ is the Floquet matrix of the labeled graph $(\Gamma,V,E)$.
\end{Proposition}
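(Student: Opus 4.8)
The plan is to compute the Floquet matrix of the constructed labeled graph $(\Gamma,V,E)$ directly from its defining formula~\eqref{Eq:MatrixEntry} and check that it agrees with $H(z)$ not merely entry by entry but term by term. First I would take $\{0\}\times W$ as a fundamental domain for the $\ZZ^r$-action on $\calV=\ZZ^r\times W$, identifying $v\in W$ with $(0,v)\in\calV$, so that the $(v,u)$ entry of the Floquet matrix of $\Gamma$ is
\[
V(0,v)\,\delta_{v,u}\ +\ \sum_{((0,v),\,(a,u))\in\calE} E\bigl((0,v),(a,u)\bigr)\,z^a\,.
\]
The goal is to show this equals $H(z)_{v,u}$.

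The diagonal constant is immediate: by construction $V(0,v)$ is the constant term of $H(z)_{v,v}$, so the $\delta_{v,u}$ term supplies exactly that constant on the diagonal. For the remaining terms I would exhibit a bijection between the edges $((0,v),(a,u))\in\calE$ and the monomials $c z^a$ of $H(z)_{v,u}$ that are not constants on the diagonal. By~\eqref{E:orbitOfEdges} with $\beta=0$, each such monomial $cz^\alpha$ in $H(z)_{v,u}$ produces precisely the edge $((0,v),(\alpha,u))$ with label $c$, and conversely every edge of $\calE$ incident to $(0,v)$ that lands in the $u$-orbit arises this way. Under this bijection the summand $E((0,v),(\alpha,u))\,z^\alpha=c z^\alpha$ matches the monomial, so summing recovers all non-constant monomials of $H(z)_{v,u}$ (all monomials when $v\neq u$); adding back the diagonal constant yields $H(z)_{v,u}$ exactly.

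The one point requiring care is that the edge set $\calE$ is consistently and non-redundantly defined, and this is exactly where the hypothesis $H(z)^T=H(z^{-1})$ enters. I would invoke the reparametrization observation from Definition-Construction~\ref{D:GraphFromMatrix}: the matching monomial $cz^{-\alpha}$ in $H(z)_{u,v}$, guaranteed by the symmetry, produces the orbit $(\beta,u)\sim(-\alpha+\beta,v)$, which is the same orbit as~\eqref{E:orbitOfEdges} read from its other endpoint, carrying the same label $c$. Hence each undirected edge orbit is counted once and $E$ is well defined on it, so no monomial is double-counted and every edge receives an unambiguous label. The case $v=u$ with $\alpha\neq 0$ is handled by the same mechanism: the two symmetric monomials $cz^{\alpha}$ and $cz^{-\alpha}$ on the diagonal yield the two edges $((0,v),(\alpha,v))$ and $((0,v),(-\alpha,v))$, each contributing its monomial to the diagonal sum.

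The main obstacle is therefore purely organizational—keeping the orbit-versus-representative and directed-versus-undirected correspondences straight—rather than any substantive difficulty, which is why the proposition can reasonably be left to the reader.
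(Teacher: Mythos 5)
The paper offers no proof of this proposition---it is explicitly left to the reader---so there is no argument to compare against; your write-up is the intended verification, carried out correctly. The term-by-term bijection between edges incident to $(0,v)$ and the non-constant monomials of $H(z)_{v,u}$, together with the use of $H(z)^T=H(z^{-1})$ to see that each undirected edge orbit is produced exactly once with a consistent label (including the diagonal case $v=u$, $\alpha\neq 0$, where the pair $cz^{\alpha}$, $cz^{-\alpha}$ yields one orbit contributing two edges at $(0,v)$), are exactly the points that need checking, and you check both.
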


We use Definition-Construction~\ref{D:GraphFromMatrix} to obtain a labeled periodic graph whose Floquet matrix is obtained
from a given Floquet matrix after substituting $\pm1$ for some of its variables.

\begin{Definition}\label{D:coordinateProjection}
  Let $(\Gamma,V,E)$ be a labeled $\ZZ^d$-periodic graph with Floquet matrix $H(z)$.
  Let $I=\{i_1<\dotsb<i_r\}\subset[d]$ be $r$ indices of coordinates, and for each $j\in[d]\smallsetminus I$, let
  $\varepsilon_j\in\{\pm1\}$ be a choice of sign.
  For $z=(z_1,\dotsc,z_r)\in\TT^r$ define $\defcolor{\zeta(z)}\in\TT^d$ by its coordinates
  \begin{equation}\label{Eq:coordSubst}
    \zeta(z)_j\ =\
    \left\{\begin{array}{rcl} z_s&&\mbox{if } j=i_s\in I,\\ \varepsilon_j&&\mbox{if }j\not\in I.\end{array}\right.\ 
  \end{equation}
  Set $\defcolor{H'(z)}\vcentcolon= H(\zeta(z))$.
  Informally, we set $z_j=\varepsilon_j$ for $j\not\in I$ and then relabel the remaining variables
  $z_1,\dotsc,z_r$.
  
  As $\varepsilon_j^{-1}=\varepsilon_j$, we have $H'(z)^T=H'(z^{-1})$.
  Let \defcolor{$(\Gamma',V',E')$} be the labeled $\ZZ^r$-periodic graph constructed from $H'(z)$ in
  Definition-Construction~\ref{D:GraphFromMatrix}.
  We call $(\Gamma',V',E')$ a \demph{coordinate projection} of $(\Gamma,V,E)$.
  It is possible to define this directly from $(\Gamma,V,E)$.
  We omit the details as they are not needed, but note that if any entry $(v,w)$ of $H'(z)$ vanishes, then there is no edge between
  any two translates of $v$ and $w$ in $\Gamma'$.
\end{Definition}

\begin{Theorem}\label{Thm:RealFamCross}
  Let $(\Gamma,V,E)$ be a $\ZZ^d$-periodic labeled graph whose dispersion polynomial $D(z,\lambda)$ is  minimally sparse.
  Then one of the following holds:
  (1) The dispersion relation has a flat band,
  (2) the dispersion relation of a coordinate projection has a flat band,
  or (3) the only critical points of the dispersion relation are the corner points.
\end{Theorem}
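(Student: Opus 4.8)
The plan is to exploit the rigid form that minimal sparsity imposes on $D$. First I would collect $D(z,\lambda)$ by its $z$-monomials: since the only monomials occurring are $z_i^{\pm1}$, and since $D(z,\lambda)=D(z^{-1},\lambda)$ forces the coefficients of $z_i$ and $z_i^{-1}$ to agree, minimal sparsity gives the normal form
\[
  D(z,\lambda)\ =\ a_0(\lambda)\ +\ \sum_{i=1}^d a_i(\lambda)\,(z_i+z_i^{-1})\,,
\]
where $a_0,\dotsc,a_d$ are ordinary polynomials in $\lambda$. The key consequence is that the critical point equations separate one variable at a time: differentiating term by term,
\[
  \frac{\partial D}{\partial z_i}\ =\ a_i(\lambda)\,(1-z_i^{-2})\,,
\]
which involves only $z_i$.

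Next I would take an arbitrary critical point $(z,\lambda_0)$ on the Bloch variety and read off the consequence of $\partial D/\partial z_i=0$: for each $i$, either $z_i^2=1$ (so $z_i=\pm1$) or $a_i(\lambda_0)=0$. Setting $I\vcentcolon=\{i\in[d]\mid z_i\neq\pm1\}$, we then have $a_i(\lambda_0)=0$ for every $i\in I$. If $I=\emptyset$ the point is a corner point, so it suffices to prove the contrapositive form of the trichotomy: if $I\neq\emptyset$ then conclusion (1) or (2) holds.

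So I would assume $I\neq\emptyset$ and pass to the coordinate projection of Definition~\ref{D:coordinateProjection} that keeps exactly the coordinates indexed by $I$ and substitutes $\varepsilon_j\vcentcolon= z_j\in\{\pm1\}$ for each $j\notin I$; by Proposition~\ref{P:GraphFromFloquet} its dispersion polynomial is $D'(z',\lambda)=D(\zeta(z'),\lambda)$. Evaluating at $\lambda_0$, the identity $\varepsilon_j+\varepsilon_j^{-1}=2\varepsilon_j$ together with $a_i(\lambda_0)=0$ for $i\in I$ kills every $z'$-dependent term, leaving the constant
\[
  D'(z',\lambda_0)\ =\ a_0(\lambda_0)\ +\ 2\sum_{j\notin I}\varepsilon_j\,a_j(\lambda_0)\,.
\]
Since $(z,\lambda_0)$ lies on the Bloch variety, evaluating at the distinguished point $z'$ with $\zeta(z')=z$ shows this constant equals $D(z,\lambda_0)=0$; hence $D'(\cdot,\lambda_0)$ vanishes identically, i.e.\ the projection has a flat band. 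When $I=[d]$ there are no substituted coordinates and this is a flat band of the original dispersion relation (conclusion (1)); when $I\subsetneq[d]$ it is a flat band of a proper coordinate projection (conclusion (2)). This yields the trichotomy.

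The substance of the argument is entirely in the normal form and the resulting separation of the critical equations; everything after that is bookkeeping. The step I would watch most carefully is matching the conventions of Definition~\ref{D:coordinateProjection}---which indices are kept versus substituted, and confirming $D'=D\circ\zeta$ via $H'(z')=H(\zeta(z'))$. The only genuine thing to verify is that ``constant in $z'$ and vanishing at one point'' really produces a flat band: the vanishing of $D'(\cdot,\lambda_0)$ as a Laurent-polynomial identity is exactly the defining condition, and one checks that such $\lambda_0$ is automatically real, since for $z'\in\TT^r$ it is a root of the characteristic polynomial of the Hermitian matrix $H'(z')$.
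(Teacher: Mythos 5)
Your proof is correct and follows essentially the same route as the paper's: the same normal form $D=h_0+\sum_i(z_i+z_i^{-1})h_i$, the same factorization $\partial D/\partial z_i=(1-z_i^{-2})h_i(\lambda)$, and the same passage to the coordinate projection determined by $I=\{i: z_i^2\neq 1\}$ to exhibit a flat band at $\lambda_0$. Your explicit treatment of the case $I=[d]$ (yielding conclusion (1) rather than (2)) is a small point the paper's proof passes over, but it does not change the argument.
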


\begin{Remark}\label{Rem:FK}
  A main result of~\cite{FaustKachkovskiy} is that if the potential is generic, then
  having a coordinate projection with a flat band is equivalent to $\Gamma'$ having a bounded connected component.
\end{Remark}

\begin{proof}[Proof of Theorem~\ref{Thm:RealFamCross}]
  As $D(z,\lambda)$ is minimally sparse and $D(z,\lambda)=D(z^{-1},\lambda)$, we have 
  \begin{equation}\label{Eq:SparseDispersion}
      D(z,\lambda)\ =\ h_0(\lambda)\ +\ \sum_{i=0}^d (z_i+z_i^{-1}) h_i(\lambda)\ ,
  \end{equation}
  for some polynomials $h_1(\lambda),\dotsc, h_d(\lambda)$.
  The Bloch variety has a flat band at $\lambda=\lambda_0$ if and only if $\lambda_0$ is a common root of every $h_i$.
  Let us suppose this not the case.

  Suppose that \defcolor{$(x,\lambda_0)$} is a critical point of the Bloch variety and that $x$ is not a corner point.
  From the formula~\eqref{Eq:SparseDispersion}, for each $i=1,\dotsc,d$, we obtain
  \[
  \frac{\partial D}{\partial z_i}(x,\lambda_0) \ =\ (1-x_i^{-2}) h_i(\lambda_0)\ .
  \]
  Then for each $i\in [d]$, either $x_i^2=1$ or else $h_i(\lambda_0)=0$.

  Let $I\subsetneq[d]$ be the set of indices such that $x_i^2\neq 1$ and thus $h_i(\lambda_0)=0$.
  For $j\not\in I$, set $\varepsilon_j\vcentcolon=x_i\in\{\pm1\}$.
  Using~\eqref{Eq:SparseDispersion} and that $h_i(\lambda_0)=0$ for $i\in I$, we have
  \begin{equation}\label{Eq:sparseD}
     0\ =\ D(x,\lambda_0)\ =\ h_0(\lambda_0)\ +\ \sum_{j\not\in I} 2\varepsilon_j h_j(\lambda_0)\,.
  \end{equation}

  For $z\in\TT^r$, define $\zeta(z)\in\TT^d$ by~\eqref{Eq:coordSubst}.
  Set $H'(z)\vcentcolon= H(\zeta(z))$, the Floquet matrix for the coordinate projection $(\Gamma',V',E')$ of
  $(\Gamma,V,E)$ arising from $I,\varepsilon$.
  Is dispersion polynomial is
  \[
  D'(z,\lambda)\ =\ \det(H'(z)-\lambda)\ =\ \det( H(\zeta(z))-\lambda)\ =\ D(\zeta(z),\lambda)\,.
  \]
  The Bloch variety for $D'(z,\lambda)$ has a flat band at $\lambda=\lambda_0$.
  Indeed,
  \begin{eqnarray*}
    D'(z,\lambda_0)
    &=& D(\zeta(z),\lambda_0)
       \ =\  h_0(\lambda_0)\ +\ \sum_{i=1}^d (\zeta(z)_i+\zeta(z)_i^{-1}) h_i(\lambda_0)\\
    & =& h_0(\lambda_0)\ +\ \sum_{j\not\in I} 2\varepsilon_j h_j(\lambda_0)\ =\ 0\,, 
  \end{eqnarray*}
  as for $i\in I$, $h_i(\lambda_0)=0$.
  The last equality is~\eqref{Eq:sparseD}.
  This completes the proof.
\end{proof}
\begin{Corollary}\label{Cor:SPEC1}
    A connected minimally sparse graph satisfies the spectral edge conjecture.
\end{Corollary}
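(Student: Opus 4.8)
The plan is to combine Theorem~\ref{Thm:RealFamCross} with the known rarity of flat bands to show that, for generic parameters of a connected minimally sparse graph $\Gamma$, only alternative (3) of that theorem can occur, and then to upgrade ``the only critical points are corner points'' to the full critical point conjecture and invoke Theorem~\ref{Th:CPC_SEC}. First I would note that, as $\Gamma$ is minimally sparse, its dispersion polynomial $D(z,\lambda)$ is minimally sparse for generic $(V,E)$, so Theorem~\ref{Thm:RealFamCross} applies; it then remains to exclude alternatives (1) and (2) on a dense open set of parameters.

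The key step is a graph-theoretic identification of the coordinate projections. I claim that every coordinate projection $(\Gamma',V',E')$ of $\Gamma$ arising from a subset $I\subseteq[d]$ is, for generic $E$, the quotient graph $\Gamma/\ZZ^{[d]\smallsetminus I}$. Indeed, substituting $z_j=\varepsilon_j$ for $j\notin I$ merges the edge orbits of $\Gamma$ that share the same endpoints and the same projected translation $\alpha|_I$, summing their labels with signs $\prod_{j\notin I}\varepsilon_j^{\alpha_j}$. Since distinct edge orbits carry independent parameters, for generic $E$ no such combination cancels, so the underlying graph of $\Gamma'$ is exactly the image of $\Gamma$ under the projection $\ZZ^d\times W\to\ZZ^I\times W$. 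As a quotient of a connected graph is connected, $\Gamma'$ is connected; being $\ZZ^r$-periodic with $r=|I|\geq 1$, its single component contains infinitely many vertices, and hence $\Gamma'$ has no bounded connected component. By Remark~\ref{Rem:FK}, this rules out a flat band of $\Gamma'$ for generic potential; taking $I=[d]$ (so $\Gamma'=\Gamma$) rules out a flat band of $\Gamma$ itself. Since there are only finitely many coordinate projections, the flat-band rarity of \cite{faust2025rareflatbandsperiodic,FaustKachkovskiy} shows that alternatives (1) and (2) fail for all parameters outside a proper algebraic subset.

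Hence for generic $(V,E)$ we are in alternative (3): the only critical points of the Bloch variety are the $2^d|W|$ corner points. For generic parameters the symmetric matrix $H(z_0)$ at each corner $z_0\in\{\pm1\}^d$ has distinct eigenvalues, so each corner point is a smooth point of the Bloch variety, and these finitely many points are automatically isolated. Thus every critical point is smooth and isolated, which is precisely the critical point conjecture for $\Gamma$; by Theorem~\ref{Th:CPC_SEC} the spectral edges conjecture follows. One may further record, via Lemma~\ref{L:minCritPts}, that each spectral band function is then a perfect Morse function.

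I expect the main obstacle to be this key step: correctly recognizing the coordinate projection as the quotient $\Gamma/\ZZ^{[d]\smallsetminus I}$, and verifying that the label cancellations that could disconnect it (and thereby create a bounded component with a flat band) occur only on a proper algebraic subset of parameters. Everything else is either bookkeeping or a direct appeal to Theorem~\ref{Thm:RealFamCross}, Remark~\ref{Rem:FK}, and Theorem~\ref{Th:CPC_SEC}.
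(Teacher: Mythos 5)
Your handling of alternatives (1) and (2) of Theorem~\ref{Thm:RealFamCross} is essentially sound: identifying the coordinate projection generically with the quotient graph, noting it is connected and unbounded, and invoking Remark~\ref{Rem:FK} is a workable (if more elaborate) alternative to the paper's move, which simply applies the flat-band rarity theorem of~\cite{faust2025rareflatbandsperiodic} to $\Gamma$ and to each of its finitely many coordinate projections to obtain the dense open set $U_\Gamma$. The genuine gap is in your final step. Knowing that the critical points are exactly the corner points and that each is a smooth point of the Bloch variety does \emph{not} give the critical point conjecture: ``isolated'' there means that the Jacobian of the critical point equations~\eqref{Eq:CPE} is invertible, which at a smooth point amounts to invertibility of the Hessian of the spectral band function, and this is precisely what the proof of Theorem~\ref{Th:CPC_SEC} consumes (via~\cite[Lem.~5.1]{FS24}) to conclude that the extrema are \emph{nondegenerate} --- an explicit part of the spectral edges conjecture. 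A finite collection of smooth critical points can perfectly well contain degenerate ones (locally $\lambda=\lambda_0+z_1^4+\dotsb$), so ``finitely many, hence automatically isolated'' does not close the argument; set-theoretic isolation is strictly weaker than multiplicity one for the system~\eqref{Eq:CPE}.

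The missing content is a Hessian computation, and it is exactly where minimal sparsity is used a second time. From~\eqref{Eq:SparseDispersion} one gets $\partial^2D/\partial z_i\partial z_j=0$ for $i\neq j$ and $\partial^2D/\partial z_i^2=2z_i^{-3}h_i(\lambda)$, so at a corner critical point $(x,\lambda_0)$ the Hessian is diagonal with entries $\pm 2h_i(\lambda_0)$; it degenerates exactly when some $h_i(\lambda_0)=0$, and that vanishing produces a flat band for the coordinate projection with $I=\{i\}$ and $\varepsilon_j=x_j$, contradicting the genericity you have already arranged. You have all the ingredients for this (the same $h_i$'s govern your exclusion of non-corner critical points), but the nondegeneracy step must be carried out explicitly; without it neither the critical point conjecture nor the spectral edges conjecture follows. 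As a secondary point, your claim that $H(x)$ has distinct eigenvalues at every corner for generic parameters is asserted without proof and would also need an argument if you intend to rely on it for smoothness.
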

\begin{proof}
  For any connected graph $\Gamma$,~\cite[Theorem 1.1]{faust2025rareflatbandsperiodic} states that the set of parameters such
  that the Bloch
  variety has a flat band lie in a proper algebraic subset.
  A $\ZZ^d$-periodic graph $\Gamma$ has finitely many (in fact $3^d-2^d-1$) distinct coordinate projections.
  It follows that there is a dense Zariski-open subset \defcolor{$U_\Gamma$} of the space of parameters $(V,E)$ for $\Gamma$
  such that neither $\Gamma$ nor any of its coordinate projections has a flat band.

  Suppose that $\Gamma$ is minimally sparse.
  By Theorem~\ref{Thm:RealFamCross}, for $(V,E)\in U_\Gamma$ the critical points of the Bloch variety occur only at the
  corner points.
  Let $(x,\lambda_0)$ be such a corner critical point.
  We claim that it is a nondegenerate critical point.
  By~\eqref{Eq:SparseDispersion}, we have
  \[
  \frac{\partial^2 D}{\partial z_i\partial z_j}(x,\lambda_0) \ =\
  \left\{ \begin{array}{rcl} 0&&\mbox{ if } i\neq j,\\  2x_i^{-3} h_i(\lambda_0)&&\mbox{ if } i=j.\end{array}\right. 
  \]
  Thus, the Hessian matrix at $(x,\lambda_0)$ is diagonal, and its determinant vanishes if and only if
  $h_i(\lambda_0)=0$ for some $i\in[d]$.

  Suppose that $h_i(\lambda_0)=0$.
  Consider the coordinate projection given by $I=\{i\}$ and $\varepsilon_j=x_j$ 
  for $j\neq i$.
  As in the proof of Theorem~\ref{Thm:RealFamCross}, this coordinate projection has a flat band,
  contradicting that$(V,E)\in U_\Gamma$.
  Thus, all critical points are are nondegenerate,  which implies the spectral edges conjecture, by Theorem~\ref{Th:CPC_SEC}.
\end{proof}  

By  Theorem~\ref{Thm:RealFamCross}, when a dispersion polynomial is minimally sparse and a non-corner point is critical, then
it lies on a flat band in a
coordinate projection.
There are several possible configurations for this as the following example shows.

\begin{Example}\label{Ex:singularHouse}
  Figure~\ref{fig:singularHouse} shows a $\ZZ^2$-periodic graph and a general labeling 
  \begin{figure}[htb]
     \centering
    \includegraphics[height=110pt]{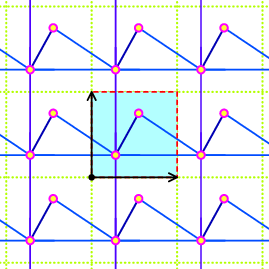}
    \qquad
   \begin{picture}(169,110)(-32,0)
    \put(  0,  0){\includegraphics{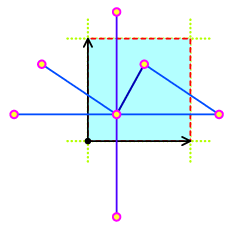}}
    \put( 36, 30){{\color{white}\circle*{9}}}
    \put( 47, 47){\small$u$}      \put( 67, 83){\small$v$}  

    \put(-32, 52){\small$-e_1{+}u$} \put(111, 52){\small$e_1{+}u$}
    \put( 61,103){\small$e_2{+}u$}  \put( 61,3){\small$-e_2{+}u$}
    \put(-18, 77){\small$-e_1{+}v$}

    \put(65, 61){\small$a$}
    \put(28, 45){\small$b$}   \put(75, 45){\small$b$}
    \put(49, 75){\small$c$}   \put(49, 24){\small$c$}
    \put(31, 73){\small$d$}   \put(82, 72){\small$d$}   
   \end{picture}

    \caption{A $\ZZ^2$-periodic minimally sparse graph.}
    \label{fig:singularHouse}
  \end{figure}
  in a neighborhood of a fundamental domain.
  We write $u$ and $v$ for the values of the potential at eponymous vertices.
  It is minimally sparse, as may be seen from its dispersion polynomial
  \[
   \lambda^2  -\lambda(u{+}v)  -a^2-d^2+uv
    + (z_1 + z_1^{-1})( b\lambda  -bv-ad)
    + (z_2 + z_2^{-1})( c\lambda -cv)\,.
  \]

  Figure~\ref{F:singHouse_BV} shows three of its Bloch varieties with their parameter values
  $(u,v,a,b,c,d)$.
  \begin{figure}[htb]

    \centering
    \begin{picture}(135,110)(0,-10)
      \put(0,0){\includegraphics[height=100pt]{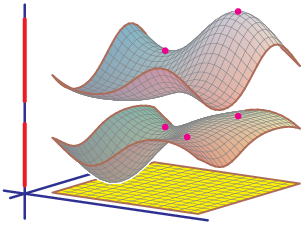}}
       \put(50,-10){\small$(3,0\,,\,3,2,2,1)$}
    \end{picture}
    \quad
    \begin{picture}(126,135)(0,-10)
      \put(0,0){\includegraphics[height=125pt]{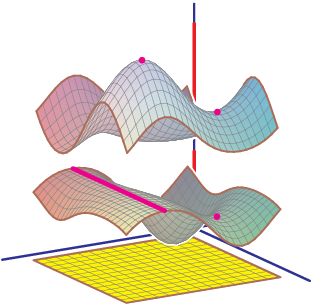}}
      \put(50,-10){\small$(2,0\,,\,1,1,1,1)$}
    \end{picture}
    \quad
    \begin{picture}(130,110)(0,-10)
      \put(0,0){\includegraphics[height=110pt]{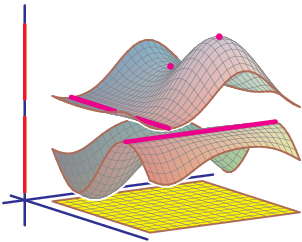}}
      \put(50,-10){\small$(1,0\,,\,1,1,1,1)$}
    \end{picture}
   \caption{Three Bloch varieties with their parameters.}\label{F:singHouse_BV}
  \end{figure}
  In the first image, the only critical points are at the corner points.
  The middle image has a 1-dimensional line of critical points on its first spectral band
  function, while the third has lines of critical points on both band functions, but in different directions.
\end{Example}

\subsection{Periodic flower graphs}
Both the Lieb lattice and the graph of Example~\ref{Ex:singularHouse} are periodic flower graphs.
Every periodic flower graph is minimally sparse and there are infinitely many in every dimension $d$.

Let  $S$  be a finite connected graph with a distinguished vertex, $u$ and
let $P_1,\dotsc,P_\ell$ be  disjoint cycles of possibly varying lengths which may include loops consisting
of one vertex and one edge.
Given these, attach each cycle $P_i$ to $S$ by identifying one of its vertices with $u$, obtaining a \demph{flower} graph, $F$.
Its \demph{petals} are the cycles $P_i$.
Observe that the petals $P_i$ and $S$ only meet at the vertex $u$ and that $F$ is connected.
We show some flower graphs
\[
\begin{picture}(66,84)
  \put(0,0){\includegraphics{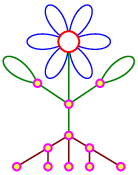}}
  \put(30,61.5){\small$u$}
\end{picture}
\qquad
\begin{picture}(64,74)
  \put(0,0){\includegraphics{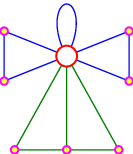}}
  \put(29,45){\small$u$}
\end{picture}
\qquad
\begin{picture}(59,74)(0,-15)
  \put(0,0){\includegraphics{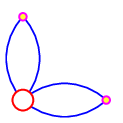}}
  \put(8,8){\small$u$}
\end{picture}
\qquad
\begin{picture}(54,74)(0,-15)
  \put(0,0){\includegraphics{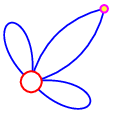}}
  \put(12,12){\small$u$}
\end{picture}
\qquad
\begin{picture}(54,74)(0,-10)
  \put(0,0){\includegraphics{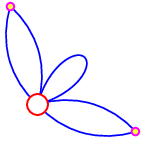}}
  \put(15,15){\small$u$}
\end{picture}
\]

Let $F$ be a flower graph with petals $P_1,\dotsc,P_\ell$ where  $\ell\geq d$.
An \demph{annotation} of $F$ is a surjection $f\colon [\ell]\twoheadrightarrow\{e_1,\dotsc,e_d\}$ onto the standard generators of $\ZZ^d$
and in each petal
$P_i$ a choice of a distinguished oriented edge $a_i\to b_i$.

We construct a $\ZZ^d$-periodic graph from an annotated flower graph $F$:
Start with the $\ZZ^d$-periodic disconnected graph $\ZZ^d\times F$, which we alter as follows.
On each copy $(\alpha,F)$ of $F$, for each petal $P_i$ remove the distinguished edge $a_i\to b_i$ and replace it by
an edge from vertex $a_i$ in  $(\alpha,F)$ to vertex $b_i$ in $(\alpha+ f(i), F)$, the translate by $f(i)$.
This gives a   \demph{$\ZZ^d$-periodic flower graph $\Gamma_F$}  which is connected as $f$ is a surjection.

\begin{Example}\label{Ex:PFG}
  We show annotations for the last three flower graphs given above.
 \[
  \begin{picture}(59,64)(0,-5)
   \put(0,0){\includegraphics{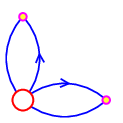}}
   \put(8,8){\small$u$}
   \put(18,45){\small$e_2$}  \put(45,18){\small$e_1$}
  \end{picture}
 \qquad
  \begin{picture}(54,64)(0,-5)
   \put(0,0){\includegraphics{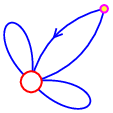}}
   \put(12,12){\small$u$}
   \put(35,52){\small$e_1$}
   \put( 3,48){\small$e_2$}  \put(45,3){\small$e_1$}
  \end{picture}
 \qquad
  \begin{picture}(65,65)(1,1)
   \put(0,0){\includegraphics{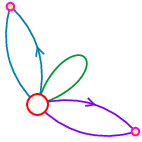}}
   \put(15,15){\small$u$}
   \put(40,44){\small$e_2$}
   \put(16,53){\small$e_1$}  \put(54,16){\small$e_2$}
  \end{picture}
 \qquad
  \raisebox{-8pt}{\includegraphics[height=80pt]{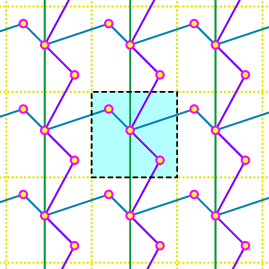}}
 \]
 The Lieb lattice is the periodic graph constructed from the first, while the second gives the graph of Figure~\ref{fig:singularHouse}.
 The graph for the third is displayed to its right.
\end{Example}

Let $F=(W,K)$ be a flower graph with vertices $W$, edges $K$ and distinguished vertex $u\in W$.
Write $L$ for the set of petals $P_i$ of $F$ that are loops, consisting of a single edge $P_i$ at $u$.
A labeling of $F$ is a pair of real-valued functions, $V\colon W\to \RR$ and $E\colon K\to \RR$.
Given a labeling, the weighted adjacency matrix \defcolor{$M$} of $F$ has 
rows and columns indexed by $W$.
For $v,w\in W$, the entry $M_{v,w}$ of $M$ is given by the following rule,
\[
    \defcolor{M_{v,w}}\ \vcentcolon=\ \left\{
    \begin{array}{rcl}
      E(v,w)&&\mbox{if } v\neq w,\\
      V(v)  &&\mbox{if } v=w\neq u,\\
      {\displaystyle V(u)+\sum_{P_i\in L} E(P_i)} &&\mbox{if } v=w= u.
    \end{array}\right.
\]

\begin{Theorem}\label{Th:FlowerGraphs}
  A periodic flower graph $\Gamma_F$ is minimally sparse.
\end{Theorem}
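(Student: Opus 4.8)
The plan is to compute the dispersion polynomial $D(z,\lambda)=\det(H(z)-\lambda I)$ of $\Gamma_F$ directly from the permutation expansion of the determinant and to show that the only monomials in $z$ that can ever occur are $1$ and $z_s^{\pm1}$ for a single index $s$; this proves minimal sparsity not merely generically but for \emph{every} labeling. First I would record the $z$-dependence of the Floquet matrix. The fundamental domain is $W$, the vertex set of $F$, and by the construction of $\Gamma_F$ the only edges crossing between translates of $W$ are the modified distinguished edges; every other edge of $F$, together with the potential, gives an entry that is constant in $z$, so that $H(z)-\lambda I$ agrees with $M-\lambda I$ off the distinguished edges. Concretely, a non-loop petal $P_i$ with $f(i)=e_s$ and distinguished edge $a_i\to b_i$ contributes $E(a_i,b_i)\,z_s$ to the $(a_i,b_i)$ entry and $E(a_i,b_i)\,z_s^{-1}$ to the $(b_i,a_i)$ entry, while a loop petal $P_i$ with $f(i)=e_s$ contributes $E(P_i)(z_s+z_s^{-1})$ to the diagonal entry $(u,u)$. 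Thus $z_s$ occurs in $H(z)$ only through petals mapped to $e_s$.

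The combinatorial heart is a lemma about the graph $F$: the only simple cycle of $F$ containing the distinguished edge of a non-loop petal $P_i$ is the petal $P_i$ itself. I would prove this from the two defining features of a flower graph, namely that every vertex of a petal other than $u$ has degree $2$ in $F$, and that distinct petals (and the core $S$) meet only at $u$. Traversing the distinguished edge of $P_i$, the degree-$2$ condition forces the cycle along the petal until it first reaches $u$; since $P_i$ is joined to the rest of $F$ only at $u$, the cycle cannot leave $P_i$ and return without revisiting $u$, so it must close up as the full petal.

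With this lemma I would analyze $\det(H(z)-\lambda I)=\sum_\sigma \operatorname{sgn}(\sigma)\prod_{v\in W}(H(z)-\lambda I)_{v,\sigma(v)}$, tracking for each $\sigma$ (and each monomial chosen from the diagonal $(u,u)$ entry) the net power of $z$. A distinguished edge $a_i-b_i$ with $f(i)=e_s$ yields a net power of $z_s$ exactly when $\sigma$ traverses it in a single direction, i.e.\ $\sigma(a_i)=b_i$ but $\sigma(b_i)\neq a_i$ (or the reverse); a $2$-cycle $(a_i\,b_i)$ contributes $z_s\cdot z_s^{-1}=1$, and an unused edge contributes nothing. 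By the cycle lemma, a single-direction traversal forces the cycle of $\sigma$ through that edge to be the entire petal $P_i$, which contains $u$. The decisive point is that $u$ lies in exactly one cycle of $\sigma$: hence at most one petal is fully traversed, so at most one index $s$ receives a net power of $z$, and that power is $\pm1$. Loop petals fit the same framework, since a factor $z_s^{\pm1}$ from the $(u,u)$ entry requires $u$ to be a fixed point of $\sigma$, which precludes any full petal traversal. Therefore every monomial of $D(z,\lambda)$ is $1$ or $z_s^{\pm1}$, and $\Gamma_F$ is minimally sparse.

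The step demanding the most care is the bookkeeping in the last paragraph: one must verify that the only sources of a net, non-cancelling power of $z$ are full-petal traversals and, for loops, the diagonal term, and that all of these compete for the single vertex $u$ and so are mutually exclusive across petals. Once the cycle lemma guarantees that any non-cancelling $z_s$ arises from traversing an entire petal through $u$, the principle that $u$ lies in a single cycle of $\sigma$ immediately caps the total $z$-degree, and minimal sparsity follows.
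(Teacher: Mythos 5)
Your proposal is correct and follows essentially the same route as the paper: both expand $\det(H(z)-\lambda I)$ over permutations, observe that a one-way traversal of a distinguished edge forces the permutation cycle to be the entire petal through $u$, and conclude from the fact that $u$ lies in a single cycle (competing also with the diagonal $(u,u)$ entry for loop petals) that each summand carries at most one factor $z_s^{\pm1}$. Your explicit ``cycle lemma'' and the separate treatment of $2$-cycles $(a_i\,b_i)$ are just slightly more formalized versions of observations the paper makes inline.
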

\begin{proof}
  Observe that each vertex of $F$ represents a unique $\ZZ^d$-orbit of vertices of $\Gamma_F$, and the same for each
  edge of $F$. 
  Thus, a labeling of $\Gamma_F$ gives a labeling of $F$ and vice-versa.
  The Floquet matrix $H(z)$ of $\Gamma_F$ may be constructed directly
  from the weighted adjacency matrix $M$ of $F$.
  Indeed, the entry $H(z)_{u,u}$ of $H(z)$ is
  \begin{equation}\label{Eq:uu-entry}
     V(u)\ +\ \sum_{P_i\in L} \left(z_{f(i)}+z_{f(i)}^{-1}\right)E(P_i)\,,
  \end{equation}
  and if $v,w\in W$ with $(v,w)\neq (u,u)$, then 
  \begin{equation}\label{Eq:H_Defn}
    H(z)_{v,w}\ \vcentcolon=\ \left\{
    \begin{array}{rcl}
      z_{f(i)}E(v,w)&&\mbox{if } v\to w\mbox{ is the directed edge in petal }P_i,\vspace{3pt}\\
      z_{f(i)}^{-1}E(v,w)&&\mbox{if } w\to v\mbox{ is the directed edge in petal }P_i,\vspace{3pt}\\
      M_{v,w}&&\mbox{otherwise.}
    \end{array}\right.
  \end{equation}

  The generic dispersion polynomial $\det(H(z){-}\lambda)$ is minimally sparse.
  Let the values of $V$ and of $E$ be indeterminates (variables).
  Note that $\det(H(z){-}\lambda)$  is obtained  by replacing each occurrence of $V(v)$ in $\det H(z)$ with $V(v){-}\lambda$.
  Thus, it suffices to show that $\det H(z)$ is minimally sparse; that the only monomials in the $z_j$ are
  single variables or their inverses.

  Writing $S_W$ for the symmetric group on the set $W$, we have
  \[
     \det H(z)\ =\ \sum_{w\in S_W} \pm \prod_{v\in W} H(z)_{v,\pi(v)}
      \ \ =\ \sum_{w\in S_W}  H(z)_\pi\,.
  \]
  The summand $H(z)_\pi$ is non-zero if and only if each entry $H(z)_{v,\pi(v)}$ for $v\in W$ is nonzero.
  When $v=\pi(v)$, so that $v$ is a fixed point of $\pi$, then $H(z)_{v,v}$ is either the constant $V(v)$ or
  else the sum~\eqref{Eq:uu-entry}, when $v=u=\pi(u)$.
  All other non-zero factors $H(z)_{v,\pi(v)}$ of $H(z)_\pi$ correspond to edges of $F$ that are not among the loops
  $L$ at $u$. 

  If  $H(z)_\pi$ is non-zero and $(v,\pi(v))$ with $v\neq \pi(v)$ is an edge in a petal $P$, then every other edge in
  $P$ occurs as $(w,\pi(w))$.
  That is, the vertices of $P$ form a cycle in $\pi$.
  This includes the distinguished vertex $u$.
  This precludes any other edge of any other petal corresponding to a factor of $H(z)_\pi$, as well as the entry $H(z)_{u,u}$.
  Similarly, if $H(z)_{u,u}$ is a factor of $H(z)_\pi$, then none of its factors correspond to edges of
  petals $P$ that are not loops in $L$. 

  These observations show that  each summand $H(z)_\pi$ has at most one factor that
  involves any variable $z_i$.
  As this factor is an entry in $H(z)$,~\eqref{Eq:uu-entry} and~\eqref{Eq:H_Defn} imply that the generic dispersion polynomial
  is minimally sparse, and completes the proof.
\end{proof}

\begin{Theorem}\label{Thm:Schroedinger}
  The spectral edges conjecture holds for Schr\"odinger operators on a $\ZZ^d$-periodic flower graph $\Gamma_F$ if and only if
  no petal of $F$ is a $2$-cycle.
\end{Theorem}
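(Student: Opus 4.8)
The plan is to derive the theorem from the machinery of Section~\ref{S:Sparse}. Since $\Gamma_F$ is minimally sparse by Theorem~\ref{Th:FlowerGraphs}, its Schr\"odinger dispersion polynomial has the form~\eqref{Eq:SparseDispersion}, $D(z,\lambda)=h_0(\lambda)+\sum_{i=1}^d(z_i+z_i^{-1})h_i(\lambda)$, and I would first record the reduction extracted from the proof of Corollary~\ref{Cor:SPEC1}: for a generic potential $V$ (with $E\equiv 1$) the spectral edges conjecture holds if and only if no coordinate projection of $\Gamma_F$ has a flat band. Indeed, by Theorem~\ref{Thm:RealFamCross} the absence of such flat bands forces every critical point to a corner, where the Hessian is the diagonal matrix with entries $2x_i^{-3}h_i(\lambda_0)$; nondegeneracy there is exactly $h_i(\lambda_0)\neq0$ for all $i$, whose failure is the flat-band condition of the projection $I=\{i\}$, while the single-band requirement follows from genericity of $V$, for which the corner eigenvalues are distinct. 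Because $\Gamma_F$ is connected it has no bounded component and hence no flat band for generic $V$; combined with the equivalence of Remark~\ref{Rem:FK} for the proper coordinate projections, this turns the theorem into the combinatorial assertion that some coordinate projection of $\Gamma_F$ has a bounded connected component if and only if some petal of $F$ is a $2$-cycle.

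The engine for this assertion is the effect of the sign substitution $\zeta$ of Definition~\ref{D:coordinateProjection} on the Floquet entries~\eqref{Eq:uu-entry}--\eqref{Eq:H_Defn}. For a Schr\"odinger operator each off-diagonal entry $H(z)_{v,w}$ is a single Laurent monomial arising from the unique edge of $F$ between $v$ and $w$, with one exception: when $v=u$ and $w$ is the outer vertex of a $2$-cycle petal $P_i$ with $f(i)=e_j$, the two parallel edges give $H(z)_{u,w}=1+z_j$. Hence a substitution $z_j=\varepsilon_j\in\{\pm1\}$ can annihilate an off-diagonal entry only in this case, at $\varepsilon_j=-1$, and this is the sole mechanism by which an edge of $\Gamma_F$ can vanish under a coordinate projection.

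For the ``only if'' direction I suppose $P_i$ is a $2$-cycle with outer vertex $w$ and $f(i)=e_j$, and take the projection retaining all coordinates but the $j$th and setting $z_j=-1$; this uses $d\geq2$, and the one-dimensional case must be treated on its own, where $\Gamma_F$ has no coordinate projection. In this projection $H(z)_{u,w}=0$, so $w$ is isolated and the dispersion polynomial factors as $D(\zeta(z),\lambda)=(V(w)-\lambda)\,D''(z,\lambda)$, exhibiting the bounded component. Equivalently, on the slice $\{z_j=-1\}\subset\TT^d$ one band function is identically $V(w)$, and since $z_j=-1$ is a corner value every point of this $(d{-}1)$-dimensional slice is a critical point. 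A second-order expansion in the transverse direction shows the band behaves as $V(w)+\tfrac12 c\,(z_j+1)^2$ with $c\neq0$ for generic $V$ (otherwise $\Gamma_F$ would itself carry a flat band), so the slice is a non-isolated extremum, contradicting the spectral edges conjecture.

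For the ``if'' direction I assume no petal is a $2$-cycle, so by the second paragraph no entry of $H(\zeta(z))$ is annihilated and every edge of $\Gamma_F$ persists in each coordinate projection $\Gamma'$. I would then show $\Gamma'$ is connected: within a single period, deleting from each petal routed to a retained direction its one distinguished edge turns that cycle into a path still attached at $u$, so $F$ stays connected; and since the annotation $f$ is surjective, each retained direction carries a periodic edge, linking all periods. A connected periodic graph has no bounded component, hence no coordinate projection has a flat band for generic $V$, and the reduction of the first paragraph yields the conjecture; in fact the critical points are then exactly the $2^d|W|$ corner points, all nondegenerate, so Lemma~\ref{L:minCritPts} makes every band a perfect Morse function. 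I expect the main obstacle to be this graph lemma---both the verification that $2$-cycles are the unique source of off-diagonal cancellation and the connectivity argument for $\Gamma'$---with a secondary delicacy in invoking the flat-band/bounded-component equivalence of Remark~\ref{Rem:FK} on the Schr\"odinger slice $E\equiv1$ rather than for generic $(V,E)$.
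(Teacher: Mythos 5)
Your argument is essentially the paper's own proof: both reduce, via Theorem~\ref{Thm:RealFamCross} and the flat-band/bounded-component equivalence of~\cite{FaustKachkovskiy} (Remark~\ref{Rem:FK}), to showing that a coordinate projection of $\Gamma_F$ acquires a bounded component exactly when some petal is a $2$-cycle, with the entry $1+z_j$ coming from a $2$-cycle petal being the unique off-diagonal entry that a sign substitution can annihilate, and with connectivity of every projection in the absence of $2$-cycles proved by the same observation that each $z_i$ enters $H(z)$ either as $z_i+z_i^{-1}$ on the diagonal or as a single monomial off the diagonal. The one point where you go beyond the paper---flagging that $d=1$ requires separate treatment---is a caveat the paper's proof shares (its projection for the $2$-cycle direction uses $I=[d{-}1]$, which is empty when $d=1$), so you have not introduced any gap beyond what is already present.
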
   
\begin{proof}
  Fixing all edge weights $E$ to be 1 while allowing arbitrary potentials gives the family of Schr\"odinger operators on $\Gamma_F$.
  As $\Gamma_F$ is connected, generic Schr\"odinger operators on $\Gamma_F$ do not have flat bands,
  by the main result of~\cite{FaustKachkovskiy}.
  We show the dichotomy:
  \begin{enumerate}
  \item If the flower graph $F$ has a petal which is a 2-cycle, then $\Gamma_F$ has a coordinate projection whose resulting graph 
    has  bounded connected components.
  \item If no petal of $F$ is a 2-cycle, then every coordinate projection is connected.
  \end{enumerate}
  The theorem follows by Theorem~\ref{Thm:RealFamCross} and the  main result of~\cite{FaustKachkovskiy} as explained in Remark~\ref{Rem:FK}.
 
  For (1), suppose without loss of generality that the petal $P_1$ of $F$ is a 2-cycle with vertices $u$ and $w$
  ($u$ is the distinguished vertex of $F$),
  that $u\to w$ is the distinguished edge, and that $f(1)=e_d$.
  The only edges in $\Gamma_F$ involving translations of $w$ are $(w,u)$ and $(u,e_d+w)$ (and translations).
  In the Floquet matrix $H(z)$ the $(u,w)$ entry is $1+z_d$ and the $(w,u)$ entry is $1+z_d^{-1}$.
  Under the coordinate projection with $I=[d{-}1]$ and $\varepsilon_d=-1$, 
  the substitution $\zeta(z)$~\eqref{Eq:coordSubst} sets $z_d=-1$.
  Thus, in $H'(z)=H(\zeta(z))$ the entries in positions $(u,w)$ and $(w,u)$ become zero, and so $w$ and its translates are
  bounded connected components in $\Gamma'_F$.

  For (2), observe that each variable $z_i$ either occurs in the diagonal entry $H(z)_{u,u}$ in the Floquet matrix
  as $z_i+z_i^{-1}$ (if the corresponding petal is a loop) or in an entry $H(z)_{v,w}$ as $z_i$ (or $z_i^{-1}$) when $v\to w$ (or $w\to v$)
  is the distinguished edge in a petal $P_j$ with $f(j)=i$.
  In either case, if we set $z_i$ to be 1 or $-1$ in a coordinate projection $\zeta(z)$, then the corresponding entry in
  $H'(z)=H(\zeta(z))$ does not vanish and the graph $\Gamma'_F$ remains connected.
\end{proof}
\begin{Remark}
  This proof provides a general template for deciding the spectral edges conjecture for a minimally sparse graph $\Gamma$
  for a fixed choice $E$ of edge weights:
  Determine whether or not there is a coordinate projection $\Gamma'$ with bounded connected components.
  These occur only if the nonzero entries of $H(z)$ which become zero in $H'(z)=H(\zeta(z))$ correspond to edges
  whose removal creates a bounded component.
  (That is, after reordering the rows and columns, $H'(z)$  is block-diagonal with one block constant in the $z_i$.)
\end{Remark}

%
\section{Isthmus-Connected Graphs}\label{S:Isthmus}

We prove the spectral edges conjecture for another class of graphs that differ from minimally sparse graphs, but
similarly have few edges between translates of fundamental domains.

\begin{Definition}\label{D:Isthmus}
  A connected graph $G$ with an induced path whose  initial and final edges  are cut edges has an \demph{isthmus}.
  These arise from a construction.
  Let $A$ be a connected graph with $a$ vertices $v_{1-a}, v_{2-a},\dotsc,v_0$,
  $I$ be a path of length $m$ with vertices (in order) $v_1,\dotsc, v_m$, and let $B$ be a connected graph with $b$
  vertices $v_{m+1},\dotsc,v_{m+b}$. 
  Then the \demph{isthmus graph $G$} is obtained from the disjoint union $A\sqcup I \sqcup B$ by adding cut edges $(v_0,v_1)$
  and $(v_m,v_{m+1})$. 
  Either $A$ or $B$ may be empty, in which case $a$ or $b$ is zero and one or both new cut edges are not needed.
  The isthmus and any new cut edges form an induced path in $G$.
  We display three isthmus graphs, indicating their 
  components $(A,I,B)$ and  parameters $(a,m,b)$.
  \begin{equation}\label{Eq:Isthmus_Graphs}
    \raisebox{-23pt}{\begin{picture}(85,54)(0,-14)
      \put( 0,10){\includegraphics{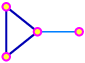}}
      \put( 7, 0){$A$}   \put(35, 0){$I$}      \put(53, 0){$B=\emptyset$}
      \put(18,-13){$(3,1,0)$}
    \end{picture}
    \qquad
    \begin{picture}(116,54)(0,-14)
        \put( 0,10){\includegraphics{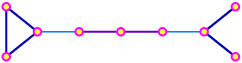}}
      \put( 7, 0){$A$}   \put(54, 0){$I$}      \put(96, 0){$B$}
      \put(37,-13){$(3,3,3)$}
    \end{picture}
    \qquad
    \begin{picture}(105,54)(0,-14)
        \put(40,15){\includegraphics{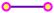}}
      \put(0, 0){$A=\emptyset$}   \put(50, 0){$I$}      \put(74, 0){$B=\emptyset$}
      \put(34,-13){$(0,2,0)$}
    \end{picture}}
  \end{equation}  

  From an isthmus graph $G=(A,I,B)$ and a function $f\colon [d]\to\{v_1,\dotsc,v_m\}$, we construct a
  $\ZZ^d$-periodic \demph{isthmus-connected graph $\Gamma$}:
  Begin with the disconnected $\ZZ^d$-periodic graph $\ZZ^d\times G$.
  For each $\alpha\in\ZZ^d$  and $j\in[d]$ add an edge between $(\alpha, f(j))$ and $(\alpha+e_j,f(j))$.
  Here are three $\ZZ^2$-periodic isthmus-connected graphs constructed from the graphs of~\eqref{Eq:Isthmus_Graphs}.
  \begin{equation}\label{Eq:threeICG}
    \raisebox{-50pt}{\includegraphics[height=110pt]{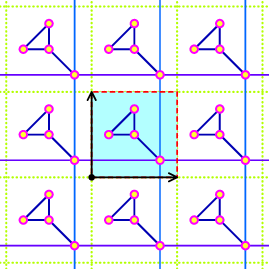}}\qquad
    \raisebox{-65pt}{\includegraphics[height=140pt]{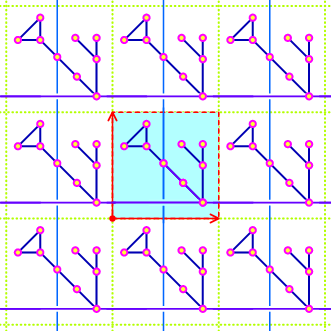}}\qquad
    \raisebox{-50pt}{\includegraphics[height=110pt]{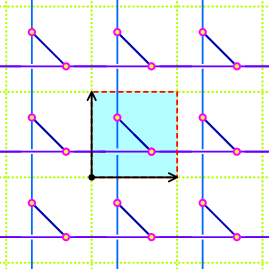}}\vspace{4pt}
  \end{equation}
  The first is a decoration of the square lattice, as studied by Schenker and Aizenman~\cite{SA}.\bigskip
\end{Definition}

\begin{Theorem}~\label{Thm:RealFamPath}
  Let $\Gamma$ be a periodic isthmus-connected graph.
  Fix nonzero edge weights.
  If the potentials are generic as in Definition~$\ref{Def:Isthmus-generic}$, then the only critical points of the Bloch variety occur at
  the corner points, and all are nondegenerate.

  The spectral edges conjecture holds for $\Gamma$ and for Schr\"odinger operators on $\Gamma$.
\end{Theorem}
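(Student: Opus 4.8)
The plan is to verify the hypotheses of Lemma~\ref{L:minCritPts}: for generic potential the Bloch variety has exactly $2^d|W|$ critical points, all nondegenerate. The first step is structural. In an isthmus-connected graph the variables $z_1,\dots,z_d$ enter the Floquet matrix $H(z)$ only through the added $\ZZ^d$-edges, each of which is a loop at an isthmus vertex $f(i)$ contributing $(z_i+z_i^{-1})$ times a nonzero edge weight to a single diagonal entry. Writing $y_i\vcentcolon= z_i+z_i^{-1}$, the matrix $H(z)$ has constant off-diagonal entries and each $y_i$ appears linearly in one diagonal entry, so $D(z,\lambda)=\det(H(z)-\lambda)$ is affine-linear in every $y_i$. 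Hence $\partial D/\partial z_i=(1-z_i^{-2})\,\partial D/\partial y_i$, and at any critical point, for each $i$ either $z_i^2=1$ or $\partial D/\partial y_i=0$. Because $y_i$ occurs only in the $(f(i),f(i))$ entry, $\partial D/\partial y_i$ is, up to the edge weight, the principal cofactor of $H(z)-\lambda$ obtained by deleting $f(i)$; and since the two cut edges together with the isthmus make the graph disconnect when any path vertex $v_p$ is removed, this cofactor factors as $L_{p-1}R_{p+1}$, the product of the characteristic polynomial $L_{p-1}$ of $A\cup\{v_1,\dots,v_{p-1}\}$ and that, $R_{p+1}$, of $\{v_{p+1},\dots,v_m\}\cup B$.

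Next I would eliminate non-corner critical points. Given such a point $(x,\lambda_0)$, set $I=\{i:x_i^2\neq1\}$ and apply the coordinate projection of Definition~\ref{D:coordinateProjection} that fixes $z_j=x_j\in\{\pm1\}$ for $j\notin I$; the frozen loops merely shift the potential, so the result is again an isthmus-connected graph for which $(x_I,\lambda_0)$ is a critical point with no coordinate on the corner. It therefore suffices to show that for generic potential an isthmus-connected graph has no critical point with $x_i^2\neq1$ for all $i$. Here the one-dimensionality of the isthmus is decisive: the equations $L_{f(i)-1}R_{f(i)+1}=0$ together with $D=0$ feed into the three-term recursions $L_p=\delta_pL_{p-1}-e_{p-1}^2L_{p-2}$ and its mirror for $R_p$ governing these determinants (here $\delta_p$ is the $p$-th diagonal entry and $e_{p-1}$ the path edge weight), whose weights $e_{p-1}$ are nonzero, so a vanishing determinant propagates along the isthmus. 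Starting from the leftmost and rightmost loops this produces two consecutive vanishing left- (respectively right-) determinants, and the recursion cascades them to the ends, forcing $\lambda_0$ to be a common root either of $\det(H_A-\lambda)$ and $\det(H_{A-v_0}-\lambda)$, or of $\det(H_B-\lambda)$ and $\det(H_{B-v_{m+1}}-\lambda)$, or of the characteristic polynomials of two disjoint induced subgraphs; when $A$ and $B$ are both empty the cascade reaches the empty determinant $1$ and is outright contradictory. By Cauchy interlacing a symmetric matrix and a principal submatrix generically share no eigenvalue, and characteristic polynomials of disjoint subgraphs are generically coprime, so the genericity of Definition~\ref{Def:Isthmus-generic} rules out every one of these finitely many coincidences.

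I would then handle the corner points directly through the Jacobian of the critical-point equations~\eqref{Eq:CPE}. At a corner the factors $1-z_i^{-2}$ vanish, which kills all mixed derivatives: a short computation gives $\partial^2D/\partial z_i\partial z_j=0$ for $i\neq j$ and $\partial^2D/\partial z_i\partial\lambda=0$ at $(x,\lambda_0)$, while $\partial^2D/\partial z_i^2=2x_i^{-3}\,\partial D/\partial y_i$. The Jacobian is therefore block diagonal, with determinant a nonzero multiple of $\frac{\partial D}{\partial\lambda}\cdot\prod_{i=1}^d\frac{\partial D}{\partial y_i}$ evaluated at $(x,\lambda_0)$. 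The factor $\partial D/\partial\lambda$ is nonzero exactly when $\lambda_0$ is a simple eigenvalue of the real symmetric matrix $H(x)$, and the $i$-th factor is the cofactor of $H(x)-\lambda_0$ at $f(i)$, nonzero exactly when $\lambda_0$ is not an eigenvalue of the corresponding vertex-deleted corner matrix; both are consequences of interlacing for generic potential. Thus every corner critical point is nondegenerate, and for generic potential each of the $2^d$ corner matrices has $|W|$ simple eigenvalues, giving exactly $2^d|W|$ distinct corner critical points.

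Combining these steps, for generic potential the Bloch variety has exactly $2^d|W|$ critical points, each nondegenerate, so Lemma~\ref{L:minCritPts} shows that they are precisely the corner points and that every spectral band function is a perfect Morse function. The extrema are then isolated and nondegenerate, and since each sits at a corner where $\lambda_0$ is a simple eigenvalue of $H(x)$, only one band passes through it, which gives the single-band statement; this establishes the spectral edges conjecture for $\Gamma$ via Theorem~\ref{Th:CPC_SEC}. Taking all edge weights equal to $1$, an admissible nonzero choice, yields the conjecture for Schr\"odinger operators on $\Gamma$. I expect the main difficulty to be the bookkeeping in the cascade of the second step: organizing the case analysis---which end the vanishing determinants travel to, and what to do when a left and a right determinant vanish at once---so that every branch terminates either in a contradiction or in one of the finitely many subgraph-eigenvalue coincidences that the genericity hypothesis is designed to exclude.
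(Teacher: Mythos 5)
Your proposal is correct and follows essentially the same route as the paper: the factorization $\partial D/\partial z_i=(1-z_i^{-2})E_i\,P_{f(i)}Q_{f(i)}$ into principal and co-principal minors, the three-term recursion cascading vanishing minors to the ends of the isthmus to exclude non-corner critical points (the paper's Lemma~\ref{Lem:myLem}), and the diagonal Hessian at corner points whose entries are nonzero by the genericity conditions of Definition~\ref{Def:Isthmus-generic}. The only deviations are cosmetic --- the paper justifies genericity via resultants rather than interlacing, works with the minimal and maximal non-corner indices instead of a coordinate projection, and delegates the single-band statement to the perturbation argument in Theorem~\ref{Th:CPC_SEC} rather than to simplicity of the corner eigenvalues.
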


We first make some definitions and clarify what is meant by generic.

\begin{Definition}\label{Def:minors}
 Let $\Gamma$ be a periodic isthmus-connected graph as in Definition~\ref{D:Isthmus}, with Floquet matrix $H(z)$.
 This has rows and columns indexed by integers from $1{-}a$ to $m{+}b$, corresponding to the vertices of the underlying
 isthmus graph $G$.
 For each index $1{-}a < s < m{+}b$, let $\defcolor{U_s(z,\lambda)}$ be the principal minor of the characteristic matrix
 $H(z)-\lambda$ formed by its rows and columns with indices {\sl less} than $s$, and let $\defcolor{L_s(z,\lambda)}$ be
 the co-principal minor formed by its rows and columns {\sl greater} than $s$.

 Define $\defcolor{P_s(z,\lambda)}\vcentcolon=\det U_s(z,\lambda)$, 
 $\defcolor{Q_s(z,\lambda)}\vcentcolon=\det L_s(z,\lambda)$, and set $P_{1-a}\vcentcolon=Q_{m+b}\vcentcolon=1$.
 Observe that if $r\leq 1$ or $s\geq m$, then no Floquet parameter appears in either $P_r$ or $Q_s$.
\end{Definition}

If $x\in\TT^d$ is a corner point, then $P_s(x,\lambda)$ is a polynomial  of degree $s{+}a{-}1$ in $\lambda$ whose coefficients
are polynomials in the potentials $V(v_i)$ for $i<s$ and weights of edges between vertices $v_i$ with $i<s$.
Similarly, $Q_s(x,\lambda)$ has degree $m+b-s$ in $\lambda$ whose coefficients are polynomials in  parameters that only involve
vertices $v_i$ with $s<i$. 
Thus, when $r\leq s$, the polynomials $P_r(x,\lambda)$ and $Q_s(x,\lambda)$ have no parameters $(V,E)$ in common.

\begin{Definition}\label{Def:Isthmus-generic}
   Parameters $(V,E)$ for a $\ZZ^d$-periodic isthmus-connected graph are \demph{generic} if 
  \begin{enumerate}
    \item No edge weight vanishes.
    \item For every corner point $x\in\TT^d$ and every pair of indices $1\leq r\leq s\leq m$, the
      polynomials $P_r(x,\lambda)$ and $Q_s(x,\lambda)$ have no common roots.
      Moreover, these polynomials have no roots in common with $D(x,\lambda)$.\qedhere
  \end{enumerate}  
\end{Definition}  

\begin{Remark}\label{Rem:resultant}
  For any corner point $x\in\TT^d$ and indices $1\leq r\leq s\leq m$,
  $P_r(x,\lambda)$ and $Q_s(x,\lambda)$ are monic polynomials in $\lambda$ and have no
  parameters from $(V,E)$ in common.
  As $\pm\prod_{i=1}^{r-1} V(v_i)$ occurs as a term in  $P_r(x,0)$ and $\pm\prod_{i=s+1}^{m+b}V(v_i)$ occurs as a
  term in $Q_s(x,0)$, the Sylvester resultant~\cite[p.~154]{CLO} of $P_r(x,\lambda)$ and $Q_s(x,\lambda)$
  is a nonzero polynomial in the parameters $(V,E)$.
  Similarly, as $D(x,\lambda)$ involves parameters $V(v_r)$ and $V(v_s)$ its resultant with either $P_r(x,\lambda)$ or
  $Q_s(x,\lambda)$ is nonzero.
  Thus, for any choice of non-zero edge weights, the set of potential values which are generic is nonempty and dense in
  $\RR^{m+a+b}$. 
\end{Remark}

The Floquet matrix of a periodic isthmus-connected graph is highly structured.
Writing \defcolor{$V_i$} for $V(v_i)$, here is a Floquet matrix for the Schr\"odinger operator on the middle graph
in~\eqref{Eq:threeICG}. 
\[
\left( \begin{array}{ccc|ccc|ccc}
    V_{-2}&  1  & 1   &  \cdot & \cdot & \cdot & \cdot & \cdot & \cdot \\
      1  &V_{-1}& 1   &  \cdot & \cdot & \cdot & \cdot & \cdot & \cdot \\
     1   &  1  &V_0  & 1 & \cdot & \cdot & \cdot & \cdot & \cdot \\  \hline
    \cdot&\cdot &  1  &V_1+z_2+z_2^{-1}  &  1 & \cdot & \cdot & \cdot & \cdot\rule{0pt}{13pt}\\
    \cdot&\cdot &\cdot & 1  &V_2 & 1 & \cdot & \cdot & \cdot\\
    \cdot&\cdot &\cdot&\cdot& 1  &V_3+z_1+z_1^{-1} & 1 &\cdot&\cdot\rule[-4pt]{0pt}{5pt}\\\hline
    \cdot & \cdot & \cdot & \cdot & \cdot & 1 & V_4  & 1 & 1\\
    \cdot & \cdot & \cdot & \cdot & \cdot & \cdot & 1 & V_5& \cdot \\
    \cdot & \cdot & \cdot & \cdot & \cdot & \cdot & 1 & \cdot & V_6 \end{array}\right)
\]
The upper left $3\times 3$ block is the weighted adjacency matrix for $A$ (a triangle), the middle block is the
triadiagonal adjacency matrix for the isthmus, and  the lower right block is for
$B$.
The 1's outside of the diagonal blocks are from the cut edges $(v_0,v_1)$ and $(v_3,v_4)$.

\begin{proof}[Proof of Theorem~\ref{Thm:RealFamPath}]
  Let $\Gamma$ be a periodic isthmus graph with nonzero edge weights and a generic potential.
  Write \defcolor{$c_r$} for the weight of the edge $(v_r,v_{r+1})$, for $r=0,\dotsc,m$ and for $j\in[d]$, write
  \defcolor{$E_j$} for the weight of the edge between the vertex $f(j)$ and its translates by $\pm e_j$.
  By construction, each Floquet parameter $z_j$ occurs in the Floquet matrix $H(z)$ as $E_j(z_j+z_j^{-1})$ and only in
  the diagonal entry of $H(z)$ corresponding to the isthmus vertex $f(j)$.

  In the vicinity of the $r$th row, the characteristic matrix $H(z)-\lambda$ has the form,
  \begin{equation}\label{Eq:row_r}
  \left( \begin{array}{c|c|c}
        U_r(z,\lambda)  &     c_{r-1}        & 0\hspace{.5pt}\quad 0 \\\hline
        0\ \  c_{r-1}& H(z)_{r,r}-\lambda & c_r \quad 0 \\\hline
        0\ \quad {0\hspace{5.5pt}}      &      c_r        &  L_r(z,\lambda) \end{array}\right)\ .
  \end{equation}

  Expanding the determinant~\eqref{Eq:row_r} along the $r$th row shows that
   \begin{equation}\label{Eq:rowExpansion}
  \pm D(z,\lambda)\ =\
  c_{r-1}^2 P_{r-1}Q_r \ -\ P_r (H(z)_{r,r}-\lambda)Q_r\ +\ c_r^2 P_r Q_{r+1}\ ,
  \end{equation}
  where if $a=0$, then $c_0=P_{-1}=0$ and if $b=0$, then $c_m=Q_{m+1}=0$.
  If $f(j)=r$, so that $z_j$ occurs in $H(z)_{r,r}$  as $E_j(z_j+ z_j^{-1})$, then none of
  $P_{r-1},P_r,Q_r$, or $Q_{r+1}$ depend upon $z_j$.
  A consequence of~\eqref{Eq:rowExpansion} is that (up to a sign) 
  \begin{equation}\label{Eq:partialDeriv}
     \frac{\partial D}{\partial z_j}(z,\lambda)\ =\
      (1-z_j^{-2}) E_j P_{f(j)}(z,\lambda)Q_{f(j)}(z,\lambda)\,,
  \end{equation}
  where  $P_{f(j)}(z,\lambda)$ and $Q_{f(j)}(z,\lambda)$ depend only upon those $z_i$ with $f(i)\neq f(j)$.

  Suppose now that \defcolor{$(x,\lambda_0)$} is a critical point of the Bloch variety.
  From~\eqref{Eq:partialDeriv}, for every $j\in[d]$, either $x_j^2=1$ or else $P_{f(j)}(x,\lambda_0) Q_{f(j)}(x,\lambda_0)=0$.
  We prove a lemma about this.

\begin{Lemma}\label{Lem:myLem}
   If $(x,\lambda_0)$ is a critical point and $x_j^2\neq 1$, then 
   $P_{f(j)}(x,\lambda_0)= Q_{f(j)}(x,\lambda_0)=0$.
\end{Lemma}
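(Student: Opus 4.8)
The plan is to show that, at the critical point $(x,\lambda_0)$, vanishing of the single product $P_rQ_r$ (where $r=f(j)$) forces \emph{both} factors to vanish, the only obstruction to having just one factor vanish being the appearance of a flat band. First I would record the entry point: since $(x,\lambda_0)$ is critical and $x_j^2\neq 1$, the derivative formula~\eqref{Eq:partialDeriv} together with $E_j\neq 0$ gives $P_r(x,\lambda_0)\,Q_r(x,\lambda_0)=0$, so at least one factor vanishes. Arguing by contradiction, suppose exactly one does. By the evident symmetry of the construction under reversing the path $I$ (which interchanges $A\leftrightarrow B$ and the minors $P_s\leftrightarrow Q_{s'}$), I may assume $P_r(x,\lambda_0)=0$ while $Q_r(x,\lambda_0)\neq 0$.

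Next I would propagate the vanishing of $P_r$ leftward along the isthmus. Substituting $P_r=0$ into the row expansion~\eqref{Eq:rowExpansion} and using $D(x,\lambda_0)=0$ leaves $c_{r-1}^2\,P_{r-1}(x,\lambda_0)\,Q_r(x,\lambda_0)=0$; as $c_{r-1}\neq 0$ and $Q_r(x,\lambda_0)\neq 0$, this yields $P_{r-1}(x,\lambda_0)=0$. Because the isthmus $I$ is a path, the vertex $v_s$ meets $U_{s+1}$ only in $v_{s-1}$, so the minors obey the three-term recurrence $P_{s+1}=(H(z)_{s,s}-\lambda)P_s-c_{s-1}^2P_{s-1}$ for $1\le s\le m$. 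Feeding $P_r=P_{r-1}=0$ into this recurrence and descending to $s=1$, the nonvanishing of the edge weights forces every minor below index $r$ to vanish. When $a\le 1$ the descent reaches the normalization $P_{1-a}=1$, already contradicting $P_{1-a}=1$; when $a\ge 2$ it yields $P_0(x,\lambda_0)=P_1(x,\lambda_0)=0$.

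In the remaining case $a\ge 2$ I would convert this into a flat band. The minors $P_0$ and $P_1$ involve only the block $A$ and so carry no Floquet variable: they are the characteristic polynomials of the real symmetric matrices $M_{A\smallsetminus v_0}$ and $M_A$, so $\lambda_0$ is a common eigenvalue of both. From this I extract an eigenvector $w$ of $M_A$ for $\lambda_0$ that vanishes at the cut vertex $v_0$: extending the $M_{A\smallsetminus v_0}$-eigenvector by zero gives $\hat u$ with $(M_A-\lambda_0)\hat u=\gamma\, e_{v_0}$ and $\hat u_{v_0}=0$; if $\gamma=0$ then $\hat u$ is the desired vector, while if $\gamma\neq 0$ then $e_{v_0}$ lies in the image of $M_A-\lambda_0$, so by symmetry every $\lambda_0$-eigenvector of $M_A$ (one exists since $P_1=0$) vanishes at $v_0$. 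Extending $w$ by zero to all of $\calV$ produces $\psi$ with $H(z)\psi=\lambda_0\psi$ for \emph{every} $z$: indeed $\psi$ is supported on $A$, vanishes at $v_0$, and the entries of $H(z)$ on $A$ carry no $z$, so no Floquet-dependent entry ever multiplies a nonzero coordinate of $\psi$. Thus $\lambda_0$ is a flat band of $\Gamma$.

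This is the contradiction I am aiming for: $\Gamma$ is connected, so for the fixed nonzero edge weights and a generic potential it has no flat band, by the main results of~\cite{faust2025rareflatbandsperiodic,FaustKachkovskiy}. Hence $Q_r(x,\lambda_0)\neq 0$ is untenable, and by the reversal symmetry so is $P_r(x,\lambda_0)\neq 0$ (with $B$ and $v_{m+1}$ playing the roles of $A$ and $v_0$); therefore $P_r(x,\lambda_0)=Q_r(x,\lambda_0)=0$. The step I expect to be the main obstacle is exactly the passage through the general end block $A$: the recurrence only carries the vanishing along the path and stalls at the cut vertex $v_0$, so the crux is manufacturing from $P_0=P_1=0$ an honest flat-band eigenvector that vanishes at $v_0$ and thereby decouples from the isthmus, which is what allows the no-flat-band genericity to close the argument.
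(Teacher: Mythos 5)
Your proof follows the paper's argument step for step through its main body: the same entry via~\eqref{Eq:partialDeriv}, the same reduction to ``exactly one of $P_r,Q_r$ vanishes'' with the same left/right symmetry, and the same downward induction combining the row expansion~\eqref{Eq:rowExpansion} with the tridiagonal three-term recurrence for the minors $P_s$ along the isthmus (your handling of the degenerate cases $a\leq 1$ via the normalization $P_{1-a}=1$ is a detail the paper leaves implicit). The two proofs part ways only at the bottom of the descent. The paper stops at $P_0(x,\lambda_0)=0$ and declares this contrary to the genericity of Definition~\ref{Def:Isthmus-generic}; you instead convert $P_0(\lambda_0)=P_1(\lambda_0)=0$ into a $\lambda_0$-eigenvector of $M_A$ vanishing at the cut vertex $v_0$, hence into a flat band of $\Gamma$, and contradict the no-flat-band genericity results of~\cite{faust2025rareflatbandsperiodic,FaustKachkovskiy}. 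Your eigenvector construction is correct. The one thing to repair is the hypothesis you contradict at the end: Theorem~\ref{Thm:RealFamPath} assumes genericity in the specific sense of Definition~\ref{Def:Isthmus-generic} --- nonvanishing edge weights plus resultant conditions at corner points --- and does \emph{not} include absence of flat bands, so importing that as an additional generic condition changes the statement being proved. Fortunately your detour closes itself without the external citation: the flat band gives $D(x^*,\lambda_0)=0$ at every corner point $x^*$, while $P_1$ carries no Floquet variable, so $P_1(x^*,\lambda_0)=0$ as well, and a common root of $P_1(x^*,\lambda)$ and $D(x^*,\lambda)$ is exactly what condition (2) of Definition~\ref{Def:Isthmus-generic} forbids. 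Indeed, this extra step buys something: the paper's own endgame juxtaposes $P_0(x,\lambda_0)=0$ with $D(x,\lambda_0)=0$ at the non-corner point $x$, whereas the genericity condition is stated only at corner points (and only for the minors with index at least $1$), so your flat-band bridge is precisely what transports the common-root contradiction to a corner point where Definition~\ref{Def:Isthmus-generic} applies.
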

\begin{proof}
  Define $r$ by $f(j)=v_r$.
  As $\partial D/\partial z_j(x,\lambda_0)=0$, by~\eqref{Eq:partialDeriv}, one of $P_r$ or $Q_r$ vanishes at $(x,\lambda_0)$.
  Suppose that only one vanishes.
  Without loss of generality, suppose that $P_r(x,\lambda_0)=0$ and $Q_r(x,\lambda_0)\neq 0$. 
  Then from~\eqref{Eq:rowExpansion},
 \[
   0\ =\ D(x,\lambda_0)\ =\ c_{r-1}^2 P_{r-1}(x,\lambda_0)Q_r(x,\lambda_0)\,.
 \]
  Thus, $P_{r-1}(x,\lambda_0)=0$.
  To continue this downward induction, note that the principal minor $U_r$ has the following structure
 \[
      U_r\ =\   \left( \begin{array}{c|c|c}
         U_{r-2}  &     c_{r-3}        & 0 \\\hline
         c_{r-3}  & H(z)_{r-1,r-1}-\lambda & c_{r-2}  \\\hline
            0    &     c_{r-2}           & H(z)_{r,r}-\lambda  \end{array}\right)\ .
 \]
  Expanding along the last row gives $\pm P_r = (H(z)_{r,r}-\lambda)P_{r-1} - c_{r-2}^2 P_{r-2}$.
  Evaluating at the critical point $(x,\lambda_0)$ gives $P_{r-2}(x,\lambda_0)=0$.

  We may continue in this fashion until we arrive at $P_0(x,\lambda_0)=0$, which contradicts that
  the potentials are generic.
  Had we assumed $P_r(x,\lambda_0)\neq 0$ and $Q_r(x,\lambda_0)=0$, then an upward induction would arrive at the
  contradiction $Q_m(x,\lambda_0)=0$.
 \end{proof}

  We return to the proof of Theorem~\ref{Thm:RealFamPath}.
  Let $(x,\lambda_0)$ be a critical point such that $x$ is not a corner point.
  Let $r$ be the minimal index with $x_j^2\neq 1$ and $v_r=f(j)$ and let $s$ be the maximal such index.
  By Lemma~\ref{Lem:myLem}, $P_r(x,\lambda_0)=Q_s(x,\lambda_0)=0$.
  Then any Floquet parameters in $P_r$ or $Q_s$ are specialized to $\pm 1$ in $P_r(x,\lambda)$ and $Q_s(x,\lambda)$.
  Therefore, if $x^*$ is any corner point such that if $x_j^2=1$, then $x^*_j=x_j$ and we have 
  \[
     P_r(x^*,\lambda)\ =\ P_r(x,\lambda)\qquad\mbox{and} \qquad
     Q_s(x^*,\lambda)\ =\ Q_s(x,\lambda)\,.
  \]
  Thus,  $P_r(x^*,\lambda_0)=Q_s(x^*,\lambda_0)=0$ and $r\leq s$, which is a contradiction to the potential $V$ being generic.
  This shows that  critical points only occur at the corner points.

  We complete the proof by showing that each such corner critical point is nondegenerate.
  Differentiating~\eqref{Eq:partialDeriv} gives
  \[
         \frac{\partial^2 D}{\partial z_j^2}(z,\lambda)\ =\
      2z_j^{-3} E_j P_{f(j)}(z,\lambda)Q_{f(j)}(z,\lambda)\,,
  \]
  as the remaining factors are independent of $z_j$.
  On the other hand, if $i\neq j$, then 
   \begin{equation}\label{Eq:mixed}
         \frac{\partial^2 D}{\partial z_i \partial z_j}(z,\lambda)\ =\
      (1-z_j^{-2})(1-z_i^{-2}) h(z,\lambda)\,,
   \end{equation}
  for some polynomial $h$, which is nonzero only if $z_i$ occurs in the remaining factors~\eqref{Eq:partialDeriv}.

  Evaluating~\eqref{Eq:mixed} at the critical point $(x,\lambda_0)$, the mixed partial derivatives vanish.
  Hence the Hessian matrix is diagonal with entries  $\pm 2E_j P_{f(j)}(x,\lambda_0) Q_{f(j)}(x,\lambda_0)$.
  As the potential is generic and $D(x,\lambda_0)=0$, no diagonal entry vanishes.
  Thus, $(x,\lambda_0)$ is nondegenerate.
\end{proof}

%
%
\section{Parallel Extensions}\label{Sec:Parallel}

Let $a$ be a nonzero real number, and write \defcolor{$\ZZ_a$} for the infinite path with vertices from $\ZZ$ and with an edge
labeled $a$ between $n$ and $n{+}1$, for $n\in\ZZ$.
\[
  \begin{picture}(166,17)
    \put(0,0){\includegraphics{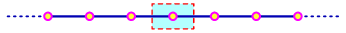}}
    \put( 31,10.5){\scriptsize$a$}  \put( 51,10.5){\scriptsize$a$}
    \put(111,10.5){\scriptsize$a$}  \put(131,10.5){\scriptsize$a$}
  \end{picture}
\]
Let $\Gamma$ be a $\ZZ^d$-periodic labeled graph.
The product $\defcolor{\Gamma_a}\vcentcolon= \ZZ_a\times\Gamma$ is a \demph{parallel extension} of $\Gamma$.
It inherits its labeling from $\ZZ_a$ and $\Gamma$.

\begin{Example}\label{Ex:parallel_extension}
Consider the $\ZZ$-periodic graph $\Gamma$, shown with a labeling and Floquet matrix.
\begin{equation}\label{Eq:linearG}
  \raisebox{-20pt}{\includegraphics{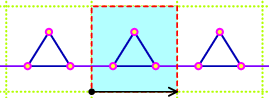}}
  \qquad
  \raisebox{-24pt}{\begin{picture}(110,55)(-16,0)
      \put(0,0){\includegraphics{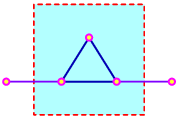}}
      \put(39,43){\small$w$}
      \put(-16, 7){\small$-1{+}v$}  \put(73, 7){\small$1{+}u$}
      \put( 26, 9){\small$u$}       \put(53, 9){\small$v$}
      \put( 39,20){\small$d$}       \put( 9,20){\small$e$}    \put(73,20){\small$e$}
      \put( 30,29){\small$b$}       \put(51,29){\small$c$} 
    \end{picture}}
  \qquad
   \left(\begin{array}{ccc}
        u  & d+ex^{-1} & b \\
      d+ex &   v      & c \\
        b  &   c      & w \end{array}\right)
\end{equation}
The only term in $\det H(z)$ involving $x$ is
$w(d+ex)(d+ex^{-1})=w(dex^{-1} + d^2+e^2 + dex)$.
Thus, $\Gamma$ is minimally supported. 
Here is its parallel extension $\Gamma_a$ and Floquet matrix.
Its vertical edges have label $a$ and the remaining edges inherit their labels from~\eqref{Eq:linearG}.
\[
  \raisebox{-50pt}{\includegraphics{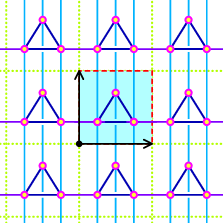}}
  \qquad 
   \left(\begin{array}{ccc}
        u+ay+ay^{-1}   & d+ex^{-1} & b \\
      d+ex &   v+ay+ay^{-1}       & c \\
        b  &   c      & w+ay+ay^{-1}  \end{array}\right)
\]
The product of diagonal entries includes monomials $y^n$ for $n=-3,\dotsc,3$, showing that the parallel extension is
not minimally supported, and that the monomials in the dispersion polynomial are quite different from those that
arise from isthmus-connected graphs.
\end{Example}

Example~\ref{Ex:parallel_extension} illustrates the general construction of the Floquet matrix $H_a(z)$ for
$\Gamma_a$ from the Floquet matrix $H(z)$ of $\Gamma$:
\[
   \mbox{add $a(z_{d+1} + z_{d+1}^{-1})$ to each diagonal entry of $H(z)$.}
\]
Consequently, the dispersion polynomial \defcolor{$D_a$} of $\Gamma_a$ is
 \begin{equation}\label{Eq:ParallelDispersion}
   D_a(z_1,\dotsc,z_d, z_{d+1}\,,\, \mu)\ =\ D(z_1,\dotsc,z_d\,,\, \mu-a(z_{d+1} + z_{d+1}^{-1}))\,,
 \end{equation}
where $D$ is the dispersion polynomial of $\Gamma$.   

For $a\neq 0$, define the map $\pi_a\colon \TT^{d+1}\times\RR \to \TT^d\times\RR$ by
\[
   \pi_a(z_1,\dotsc,z_d, z_{d+1}\,,\, \mu)\ =\ (z_1,\dotsc,z_d\,,\, \mu-a(z_{d+1} + z_{d+1}^{-1}))\,.
\]
This has fibres isomorphic to $\TT$.

\begin{Theorem}\label{Thm:Parallel_Critical}
  Suppose that $\Gamma$ is a labeled $\ZZ^d$-periodic graph and let $\Gamma_a$ with $a\neq 0$ be a parallel extension of\/ $\Gamma$.
  Write $\BV$ and $\BV\!_a$ for their Bloch varieties.
  We have
  \begin{enumerate}
    \item $\BV\!_a=\pi_a^{-1}(\BV)$.

    \item If $(x\,,\,\mu)$ is a critical point of $\BV\!_a$, then $\pi_a(x\,,\,\mu)$ is a critical point of $\BV$.

    \item If $(z\,,\,\lambda)$ is a nondegenerate critical point of $\BV$, then the critical points of $\BV\!_a$ in the fiber
           $\pi_a^{-1}(z\,,\,\lambda)$  are $(z,\pm1,\lambda\pm2a)$.

    \item If $(z\,,\,\lambda)$ is a degenerate critical point of $\BV$, then  the fiber $\pi_a^{-1}(z\,,\,\lambda)$
      consists of degenerate  critical points.

  \end{enumerate}
\end{Theorem}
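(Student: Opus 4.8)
The plan is to exploit the explicit relationship~\eqref{Eq:ParallelDispersion} between the dispersion polynomials $D_a$ and $D$, which lets me compute all the partial derivatives of $D_a$ in terms of those of $D$ via the chain rule. Writing $\mu - a(z_{d+1}+z_{d+1}^{-1})$ as the ``shifted'' energy variable, I observe that $D_a(z,z_{d+1},\mu) = D(z,\mu-a(z_{d+1}+z_{d+1}^{-1}))$. First I would establish the chain-rule identities: for $i\in[d]$, $\partial D_a/\partial z_i = (\partial D/\partial z_i)\circ\pi_a$; for the energy, $\partial D_a/\partial\mu = (\partial D/\partial\lambda)\circ\pi_a$; and for the new variable, $\partial D_a/\partial z_{d+1} = -a(1-z_{d+1}^{-2})(\partial D/\partial\lambda)\circ\pi_a$. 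These three computations, together with part~(1) which identifies $\BV\!_a = \pi_a^{-1}(\BV)$, are the backbone of the whole theorem and should be carried out once at the start.

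For part~(4), the hypothesis is that $(z,\lambda)$ is a \emph{degenerate} critical point of $\BV$, meaning the Jacobian of the critical point equations~\eqref{Eq:CPE} for $D$ fails to be invertible at $(z,\lambda)$ (equivalently, by~\cite[Lem.\ 5.1]{FS24}, the Hessian of the band function is singular). I want to show every point in the fiber $\pi_a^{-1}(z,\lambda)$ is a degenerate critical point of $\BV\!_a$. A point in the fiber has the form $(z,w,\mu)$ with $\mu - a(w+w^{-1}) = \lambda$. That such a point is critical follows from part~(2)'s converse, or directly from the chain-rule identities: since $(z,\lambda)$ is critical, $D=0$ and $\partial D/\partial z_i=0$ there, forcing $D_a=0$ and $\partial D_a/\partial z_i=0$; and $\partial D_a/\partial z_{d+1}$ vanishes because any critical point of $\BV$ satisfies $\partial D/\partial\lambda = 0$ as well (a critical point of the Bloch variety where $\lambda$ is the function has $\partial D/\partial\lambda=0$ precisely at singular points—here I must be careful, see below). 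The substance is the degeneracy: I would compute the $(d+2)\times(d+2)$ Jacobian of the critical-point system for $D_a$ (the equations $D_a = \partial_{z_1}D_a = \dotsb = \partial_{z_{d+1}}D_a = 0$) and show it is singular whenever the corresponding $(d+1)\times(d+1)$ Jacobian for $D$ is singular at $(z,\lambda)$.

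The key structural point is that the Jacobian of the $D_a$-system, after the change of variables, acquires a block-triangular (or block form) decomposition in which the Jacobian of the $D$-system appears as a diagonal block, so that its singularity propagates. Concretely, the rows indexed by $D_a$ and $\partial_{z_1}D_a,\dotsc,\partial_{z_d}D_a$ and their derivatives in $z_1,\dotsc,z_d,\mu$ reproduce exactly the $D$-system Jacobian (pulled back through $\pi_a$), while the extra variable $z_{d+1}$ and the extra equation $\partial_{z_{d+1}}D_a = 0$ contribute rows and columns whose contribution factors through $\partial D/\partial\lambda$ and its derivatives. I expect the determinant of the enlarged Jacobian to factor, up to nonzero terms, as (something)$\times\det(\text{Jacobian of }D\text{-system})$, so that degeneracy of the latter forces degeneracy of the former.

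\textbf{The main obstacle} I anticipate is handling the new variable $z_{d+1}$ correctly at the critical points $(z,w,\mu)$ where $w\neq\pm1$. When $w^2\neq1$, the factor $(1-z_{d+1}^{-2})$ in $\partial D_a/\partial z_{d+1}$ is nonzero, so for this to vanish I genuinely need $(\partial D/\partial\lambda)(z,\lambda) = 0$. This is not automatic from criticality alone—it holds exactly when $(z,\lambda)$ is a \emph{singular} point of $\BV$, and indeed part of the content of the theorem is that degenerate critical points of $\BV$ are singular points (a degenerate critical point has multiplicity $>1$, as noted after Lemma~\ref{L:minCritPts}, hence $\partial D/\partial\lambda=0$ there). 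So the first thing I would verify carefully is that the degeneracy hypothesis forces $\partial D/\partial\lambda(z,\lambda)=0$, which then makes the entire fiber critical and lets the Jacobian-degeneracy argument go through uniformly in $w$.
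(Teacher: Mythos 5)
Your chain-rule identities and the resulting proofs of (1)--(3) are exactly the paper's argument: $\partial D_a/\partial z_i=(\partial D/\partial z_i)\circ\pi_a$ for $i\le d$, and $\partial D_a/\partial z_{d+1}=a(z_{d+1}^{-2}-1)\,(\partial D/\partial\lambda)\circ\pi_a$, with nondegeneracy forcing $\partial D/\partial\lambda\neq0$ via \cite[Lem.~5.1]{FS24} and hence $z_{d+1}=\pm1$. For the degeneracy propagation in (4) your Jacobian factorization is a valid (if heavier) alternative to the paper's one-line argument: in the $(d{+}2)\times(d{+}2)$ Jacobian of the $D_a$-system the $z_{d+1}$-column of the first $d{+}1$ rows equals $a(z_{d+1}^{-2}-1)$ times the $\mu$-column, so one column operation gives $\det J_a=-2az_{d+1}^{-3}\,(\partial D/\partial\lambda)\cdot\det J$, and singularity of $J$ propagates. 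The paper instead just observes that $\partial D_a/\partial\mu=\partial D/\partial\lambda=0$ kills the first row of the Jacobian.

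The genuine gap is in the step you yourself flag as the crux: you justify that a degenerate critical point of $\BV$ satisfies $\partial D/\partial\lambda=0$ by arguing ``a degenerate critical point has multiplicity $>1$, as noted after Lemma~\ref{L:minCritPts}, hence it is a singular point of $\BV$.'' The inequality in that remark goes the other way: the multiplicity of a point \emph{on the Bloch variety} is a \emph{lower bound} for its multiplicity as a solution of the critical point equations. So a singular point of $\BV$ is a degenerate critical point, but a degenerate critical point (multiplicity $>1$ for the system~\eqref{Eq:CPE}) need not be a singular point of $\BV$; a priori the degeneracy could come from a singular Hessian while $\partial D/\partial\lambda\neq0$, in which case the fiber points with $z_{d+1}^2\neq1$ would fail the equation $\partial D_a/\partial z_{d+1}=0$ and would not even be critical. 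The implication you need --- degenerate $\Rightarrow\partial D/\partial\lambda=0$ --- is exactly what the paper extracts from \cite[Lem.~5.1]{FS24} (which relates invertibility of the Jacobian of~\eqref{Eq:CPE} to $\partial D/\partial\lambda$ and the Hessian), not from the multiplicity comparison in Lemma~\ref{L:minCritPts}. Replace your justification with a citation to that lemma (or a direct proof of the equivalence it encodes) and the rest of your argument goes through.
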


We deduce that parallel extension preserves the property of spectral band functions being perfect Morse functions.

\begin{Corollary}
  If every spectral band function of a labeled graph $\Gamma$ is a perfect Morse function,
  then every spectral band function of a parallel extension of\/ $\Gamma$ is a perfect Morse function.
\end{Corollary}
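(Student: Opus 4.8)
The plan is to combine Theorem~\ref{Thm:Parallel_Critical} with Lemma~\ref{L:minCritPts} and a count of critical points. I would first recall what ``perfect Morse function'' means for the setup at hand: by Lemma~\ref{L:minCritPts}, every spectral band function of $\Gamma$ being a perfect Morse function is equivalent to the total number of critical points of $\BV$ being exactly $2^d|W|$, all of them nondegenerate and located at corner points. So the hypothesis gives us that $\BV$ has precisely $2^d|W|$ critical points, each nondegenerate and at a corner point, and my goal is to show that $\BV\!_a$ has exactly $2^{d+1}|W|$ critical points, each nondegenerate and at a corner point, so that Lemma~\ref{L:minCritPts} applies again to $\Gamma_a$ (noting that $\Gamma_a$ has fundamental domain of the same size $|W|$ and dimension $d{+}1$).

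The core of the argument is a bijective/counting step using parts (2) and (3) of Theorem~\ref{Thm:Parallel_Critical}. By part (2), every critical point of $\BV\!_a$ projects under $\pi_a$ to a critical point of $\BV$; thus the critical points of $\BV\!_a$ lie in the fibers over the critical points of $\BV$. Since $\Gamma$ satisfies the hypothesis, every critical point $(z,\lambda)$ of $\BV$ is nondegenerate, so part (3) applies to each of them: the critical points of $\BV\!_a$ in the fiber $\pi_a^{-1}(z,\lambda)$ are exactly the two points $(z,\pm1,\lambda\pm 2a)$. Counting, each of the $2^d|W|$ critical points of $\BV$ contributes exactly $2$ critical points of $\BV\!_a$, giving a total of $2\cdot 2^d|W|=2^{d+1}|W|$ critical points for $\BV\!_a$. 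Moreover each such point has $(d{+}1)$st coordinate $z_{d+1}=\pm 1$ and, since $(z,\lambda)$ is a corner point of $\BV$ (so $z^2=1$), the full point $(z,\pm 1,\lambda\pm 2a)$ is a corner point of $\BV\!_a$.

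It remains to check nondegeneracy of these $2^{d+1}|W|$ critical points. Here I would argue that the count of exactly $2^{d+1}|W|$ critical points, all at corner points, is precisely the situation of the third statement of Lemma~\ref{L:minCritPts}: once we know there are exactly $2^{d+1}|W|=2^{d+1}|W|$ critical points and each is nondegenerate, that lemma concludes every spectral band function of $\Gamma_a$ is a perfect Morse function. To confirm nondegeneracy I would invoke part (4) of Theorem~\ref{Thm:Parallel_Critical} in its contrapositive form: if some critical point of $\BV\!_a$ were degenerate, then its image under $\pi_a$ would be a critical point of $\BV$ all of whose fiber consists of degenerate critical points, and in particular the image would itself be a degenerate critical point of $\BV$, contradicting the hypothesis that all critical points of $\BV$ are nondegenerate. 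Hence every critical point of $\BV\!_a$ is nondegenerate.

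Assembling the pieces: $\BV\!_a$ has exactly $2^{d+1}|W|$ critical points, all nondegenerate and at corner points, so by the third statement of Lemma~\ref{L:minCritPts} each spectral band function of $\Gamma_a$ is a perfect Morse function. The main obstacle I anticipate is the bookkeeping that turns ``perfect Morse for every band'' into the clean global count $2^d|W|$ of nondegenerate corner critical points and back again; the geometric content is entirely carried by Theorem~\ref{Thm:Parallel_Critical}, and once the equivalence with the critical-point count is in hand, the proof is essentially a multiplication by two in each fiber together with the contrapositive of part (4).
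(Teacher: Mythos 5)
Your argument has two genuine gaps, both stemming from routing the proof through the \emph{complex} Bloch variety and Lemma~\ref{L:minCritPts}. First, the opening step asserts that Lemma~\ref{L:minCritPts} gives an \emph{equivalence} between ``every band is perfect Morse'' and ``$\BV$ has exactly $2^d|W|$ nondegenerate critical points, all at corner points.'' The lemma only states the implication in one direction (exact count and nondegeneracy imply perfect Morse); the converse is not proved in the paper and is not available: the hypothesis of the Corollary constrains only the real band functions on $\TT^d$, whereas critical points of $\BV$ live in $(\CC^\times)^d\times\CC$ and may include complex (and possibly degenerate, non-isolated) critical points invisible to the real dispersion relation. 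So you cannot extract the global count $2^d|W|$ for $\BV$, and without it neither the fiber-counting step nor the final appeal to the third statement of Lemma~\ref{L:minCritPts} for $\BV\!_a$ goes through. Second, your nondegeneracy argument uses the \emph{converse} of Theorem~\ref{Thm:Parallel_Critical}(4): part (4) says that a degenerate critical point \emph{downstairs} forces degeneracy in its fiber, whereas you argue ``degenerate upstairs implies degenerate downstairs.'' That implication is not what part (4) states, so nondegeneracy of the critical points of $\BV\!_a$ is not established.

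Both gaps disappear if you argue directly on the real torus, which is how the Corollary actually follows from the construction. Since $H_a(z,z_{d+1})=H(z)+a(z_{d+1}+z_{d+1}^{-1})I$, the spectral band functions of $\Gamma_a$ are exactly $g_i(x,\theta)=f_i(x)+2a\cos(2\pi\theta)$, where $f_1,\dotsc,f_{|W|}$ are the band functions of $\Gamma$. A critical point of $g_i$ is a pair consisting of a critical point of $f_i$ and a critical point of $2a\cos(2\pi\theta)$ on $\TT$; the Hessian is block diagonal, so each such point is nondegenerate, and the count is $2^d\cdot 2=2^{d+1}$, the sum of the Betti numbers of $\TT^{d+1}$. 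Hence each $g_i$ is a perfect Morse function. This uses only the hypothesis on the real band functions and a routine Hessian computation, with no reference to the complex critical locus.
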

\begin{proof}[Proof of Theorem~\ref{Thm:Parallel_Critical}]
  The first claim is a  consequence of~\eqref{Eq:ParallelDispersion}.
  That formula also implies that if  $i\leq d$ and $(x\,,\,\mu)\in\TT^{d+1}\times\RR$, then
  \[
    \frac{\partial D_a}{\partial z_i}(x\,,\,\mu)\ =\
    \frac{\partial D}{\partial z_i}(\pi_a(x\,,\,\mu))\,.
  \]
  The second claim follows.

  Define $\defcolor{\lambda(z_{d+1},\mu)}\vcentcolon= \mu-a(z_{d+1}+z_{d+1}^{-1})$.
  We compute
  \begin{equation}\label{Eq:ChainRule}
    \frac{\partial D_a}{\partial z_{d+1}}\ =\
    \frac{\partial D(z,\lambda(z_{d+1},\mu))}{\partial z_{d+1}}\ =\
    \frac{\partial D}{\partial\lambda}\cdot \frac{\partial \lambda}{\partial z_{d+1}}\ =\ 
    \frac{\partial D}{\partial\lambda} \cdot a(z_{d+1}^{-2}-1)\,.
  \end{equation}
  If $(z,\lambda)$ is a critical point of $\BV$, then $(z,z_{d+1},\lambda+a(z_{d+1}+z_{d+1}^{-1}))$ is a critical point
  of $\BV\!_a$ if is it a zero of $\partial D_a/\partial z_{d+1}$.
  By~\eqref{Eq:ChainRule}, we have $\partial D/\partial \lambda \cdot a (z_{d+1}^{-2}-1)=0$.

  If $(z,\lambda)$ is a nondegenerate critical point, then by~\cite[Lem.\ 5.1]{FS24},
  $\partial D/\partial \lambda \neq 0$.
  Thus, $z_{d+1}^{-2}=1$, proving (3).

  For (4), as $(z,\lambda)$ is degenerate,  $\partial D/\partial \lambda= 0$, again by~\cite[Lem.\ 5.1]{FS24}.
  Then~\eqref{Eq:ChainRule} implies that   $\partial D_a/\partial \lambda(z,z_{d+1},\lambda+a(z_{d+1}+z_{d+1}^{-1}))= 0$,
  which completes the proof.
\end{proof}

%
%

\def\cprime{$'$}

\providecommand{\bysame}{\leavevmode\hbox to3em{\hrulefill}\thinspace}
\providecommand{\MR}{\relax\ifhmode\unskip\space\fi MR }
\providecommand{\MRhref}[2]{%
  \href{http://www.ams.org/mathscinet-getitem?mr=#1}{#2}
}
\providecommand{\href}[2]{#2}

\end{document}